\newtheorem{theorem}{Theorem}[section]
\newtheorem{corollary}[theorem]{Corollary}
\newtheorem{lemma}[theorem]{Lemma}
\newtheorem{remark}[theorem]{Remark}
\newtheorem{definition}[theorem]{Definition}
\newcommand{\argmin}{\mathop{\rm arg\min}}
\newcommand{\innerp}[1]{\langle {#1} \rangle}
\newcommand*\Bell{\ensuremath{\boldsymbol\ell}}
\newcommand{\abs}[1]{\lvert#1\rvert}
\def\Z{\mathbb{ Z}}
\def\R{\mathbb{ R}}
\def\N{\mathbb{ N}}
\def\C{\mathbb{ C}}
\newcommand{\lfpart}{\left\langle}
\newcommand{\rfpart}{\right\rangle}
\begin{document}

\title{Multivariate discrete least-squares approximations with a new type of collocation grid}

\author{
Tao Zhou
\thanks{Institute of Computational Mathematics and Scientific/Engineering Computing, AMSS, the Chinese Academy of Sciences, Beijing, China,
Email: {tzhou@lsec.cc.ac.cn}}
\and
Akil Narayan
\thanks{Mathematics Department, University of Massachusetts Dartmouth, North Dartmouth, USA.
Email: {akil.narayan@umassd.edu}}
\and
Zhiqiang Xu
\thanks{Institute of Computational Mathematics and Scientific/Engineering Computing, AMSS, the Chinese Academy of Sciences, Beijing, China,
Email: {xuzq@lsec.cc.ac.cn}}
}

\date{}

\maketitle

\begin{abstract}
In this work, we discuss the problem of approximating a multivariate function by discrete least squares projection onto a polynomial space using a specially designed \textit{deterministic} point set. The independent variables of the function are assumed to be random variables, stemming from the motivating application of  Uncertainty Quantification (UQ).

Our deterministic points are inspired by a theorem due to Andr\'e Weil. We first work with the Chebyshev measure and consider the approximation in Chebyshev polynomial spaces. We prove the stability and an optimal convergence estimate, provided the number of points scales quadratically with the dimension of the polynomial space. A possible application for quantifying \textit{epistemic} uncertainties is then discussed. We show that the point set asymptotically equidistributes to the product-Chebyshev measure, allowing us to propose a weighted least squares framework, and extending our method to more general polynomial approximations. Numerical examples are given to confirm the theoretical results. It is shown that the performance of our deterministic points is similar to that of randomly-generated points. However our construction, being deterministic, 
does not suffer from probabilistic qualifiers on convergence results. (E.g., convergence "with high probability".)
\end{abstract}

\section{Introduction}
In recent years, there has been a growing need for including uncertainty in mathematical
models and quantifying its effect on outputs of interest used in decision making. This is the well known Uncertainty Quantification (UQ). In general, a probabilistic setting can be used to include these uncertainties in mathematical models. In a such framework, the input data are modeled as random variables, or more generally, as random fields with a given correlation structure.
Thus, the goal of the mathematical and computational analysis becomes the prediction of
statistical moments of the solution or statistics of some quantities of physical interest of the solution, given the probability distribution of the input random data.

A fundamental problems in UQ is to approximate a multivariate function $Z=f(x,Y_1,Y_2,\cdot\cdot\cdot,Y_N)$ with random parameters $\{Y_i\}_{i=1}^N,$ where $Z$ might be a solution resulting from a stochastic PDE problem or other kinds of complex model.
Numerical methods for such problems have been well developed in recent years: See, e.g. \cite{Hyperbolic1,XiuK1,XiuK2,Ghanem,Xiu,XiuH,FabioC2,FabioC3,narayan_stochastic_2012} and references therein.
A popular approach that has received considerable attention is the generalized Polynomial Chaos (gPC) method \cite{XiuK1,XiuK2,Ghanem}, which is the generalization of the Wiener-Hermite polynomial chaos expansion developed in \cite{Wiener}. In gPC methods, one expands the solution in polynomials of the input random variables. This method exhibits high convergence
rates with increasing order of the expansion, provided that solutions are sufficiently
smooth with respect to the random variables. However, in traditional ``intrusive" gPC approaches, solvers for the resulting coupled deterministic equations are often needed, which can be
very complicated if the underlying differential equations have nontrivial and nonlinear
forms (cf. \cite{XiuK1,Hyperbolic1,Hyperbolic2}).

To efficiently build a gPC approximation, one could also consider a discrete least squares projection onto a polynomial space. The least squares approach using different types of sampling grids (such as randomly generated points, Quasi-Monte Carlo points, etc) has already been proposed in the framework of UQ, and has been explored in several contexts \cite{NonI,NonII,Point,Regression,ChengHY}. One can also find comparisons between the use of random samples, Quasi-Monte Carlo points and sparse grid points \cite{Comparison}. The corresponding numerical analysis for the least squares approach with random samples is also addressed in much of the literature. In \cite{FabioL2}, for bounded measures, the authors proved an optimal convergence estimate (up to a logarithmic factor) for the one dimension case, provided the number of samples scales quadratically with the dimension of the polynomial space. Using different techniques, the authors of \cite{Cohen} proved a more general result. In particular, the approach in \cite{Cohen} is not limited to polynomial spaces. We remark that in \cite{FabioL2,Cohen}, the convergence results are in probability, e.g, convergence with high probability or convergence in expectation, and such conditions on convergence are inescapable when random samples are used.

The aim of this paper is to employ specially designed {\em deterministically} generated points to approximate multivariate functions by a discrete least squares projection,
which is different from other works where random points (with noisy or noise-free data) are used.
 Our deterministic points are inspired by one of Andr\'e Weil's theorems in number theory.  Not only are our deterministic points easy to compute and store, but also the corresponding analysis is also \textit{deterministic} (convergence is not qualified by a probabilistic condition). In our approach, Weil's theorem guarantees that the use of design points results in a good regression matrix, and thus guarantees stability. More precisely, by considering the Chebyshev polynomial approximation, we prove stability and an optimal convergence estimate, provided that the number of points scales quadratically with the dimension of the polynomial space. We also show the application of such an approach for quantifying the epistemic uncertainties in parameterized problems. Using Hermann Weyl's equidistribution criterion from Diophantine approximation, we are able to conclude that the geometric distribution of the deterministic grid converges weakly to the tensor-product arcsine (Chebyshev) measure. This results allows us to extend the Chebyshev polynomial approximation to more general polynomial approximations by considering the weighted least squares framework. Numerical examples are given to show the efficiency of our deterministic sampling method.

The rest of the paper is organized as follows: In Section 2, we introduce the problem of approximating a function in $d$ underlying variables by discrete least squares projection onto a polynomial space. Some common choices of high dimensional polynomial spaces are described and the deterministic collocation points are introduced. In Section 3, by considering Chebyshev polynomial approximations, we prove the stability and convergence properties for the proposed numerical method. We then extend the approach in Section 3 to general polynomial approximations by considering the weighted least squares approach. Several numerical examples are given in Section 4 to confirm the theoretical results. We finally give some conclusions in Section 5.

\section{Least squares projection with deterministic points}
In this section, we follow closely the notations  of \cite{FabioL2,Cohen} and give a basic introduction for the discrete least squares approach.

Let $\mathbf{Y}=(Y^1, \cdot\cdot\cdot Y^d)^T$ be a vector with $d$ random variables, which takes values in a bounded domain $\Gamma\subset \mathbb{R}^d.$ Without loss of generality, we assume $\Gamma\equiv [-1,1]^d.$  We assume that the variables $\{Y^i\}_{i=1}^d$ are mutually independent and have marginal probability density functions $\rho^i$ associated with random variable $Y^i.$  We let $\rho(\mathbf{Y})= \prod_{i=1}^d \rho^i(Y^i): \Gamma\rightarrow \mathbb{R}^+$ denote the joint probability density function (PDF) of $\mathbf{Y}.$ The goal here is to approximate a function $Z=f(\mathbf{Y}): \Gamma\rightarrow \mathbb{R}$ by $\mathbf{Y}$-polynomials.

We assume that the functions considered in this paper are in the space $L_\rho^2$
endowed
with the norm
\begin{equation}
||f||_{L^2_\rho}=\Big(\int_\Gamma f^2(\mathbf{Y}) \rho(\mathbf{Y}) d\mathbf{Y}\Big)^{1/2}.
\end{equation}
Considering problems with only one stochastic variable, the $L^2_\rho$-\textit{best} type of approximation polynomial can be explicitly formulated by choosing a basis according to the PDF of the random variable; for example, Legendre polynomials are associated with the uniform distribution, Jacobi polynomials with Beta distributions,  Hermite polynomials with Gaussian distribution, and so on \cite{XiuK1,XiuK2}. For higher dimensional cases, one can construct a multivariate polynomial basis by tensorizing univariate orthogonal polynomial bases $\{\phi^i_j\}_{j=1}^\infty$, whose elements are orthogonal with respect to each density function $\{\rho^i\}_{i=1}^d$. To do this we consider the following multi-index:
\begin{align*}
 \mathbf{n}=(n^1,\cdot\cdot\cdot,n^d)\in \mathds{N}^d, \quad \mathrm{with} \quad  |\mathbf{n}|=n^1+\cdot\cdot\cdot+n^d.
\end{align*}
Elements of a $d$-dimensional orthogonal polynomial basis can be written as
\begin{align*}
\mathbf{\Phi}_\mathbf{n}(\mathbf{Y})=\prod_{i=1}^d \phi^i_{n^i}(Y^i),
\end{align*}
where $\{\phi^i_k\}_{k=1}^\infty$ are one dimensional polynomials, \textit{orthonormal} with respect to the weight function $\rho_i.$

Let $\Lambda\subset \mathds{N}^d$ be a finite multi-index set, and let $N:=\# \Lambda$ be the cardinality of the index set $\Lambda.$ A finite dimensional polynomial space identified by $\Lambda$ is given by
\begin{align*}
\mathbf{P}^\Lambda :=\textmd{span}\{\mathbf{\Phi}_\mathbf{n}(\mathbf{Y}), \,\,\mathbf{n}\in \Lambda \}.
\end{align*}

Throughout the paper, the $L^2_\rho$-best approximation of $f(\mathbf{Y})$ in $\mathbf{P}^\Lambda$ will be denoted by $P^\Lambda f,$ namely,
\begin{align}\label{eq:best}
P^\Lambda f := \argmin_{p\in \mathbf{P}^\Lambda} \|f-p\|_{L^2_\rho}.
\end{align}
In general, the best approximation $P^\Lambda f$ can not be computed explicitly without complete information about $f$. In this work
we consider the construction of a polynomial approximation $f^\Lambda \in \mathbf{P}^\Lambda$ for the function $Z=f(\mathbf{Y})$ by the least squares approach. To this end, we first compute the exact
function values of $f$ at $\mathbf{y}_0, . . . , \mathbf{y}_m\in \R^d$ with  $m+1> N$.
Then, we find a discrete least square approximation $f^\Lambda$ by requiring
\begin{align}\label{eq:least}
f^\Lambda := P^\Lambda_m f = \argmin_{p\in \mathbf{P}^\Lambda}  \sum_{k=0}^m\left(p(\mathbf{y}_k)-f(\mathbf{y}_k)\right)^2.
\end{align}
We introduce the discrete inner product
\begin{align}\label{eq:least_norm}
\innerp{u, v}_m= \sum_{k=0}^m u(\mathbf{y}_k)v(\mathbf{y}_k)
\end{align}
and the corresponding discrete norm $\|u\|_{m}=\innerp{u, u}^{1/2}_m$.
Then we can rewrite equation \eqref{eq:least} as
\begin{align}\label{eq:least1}
f^\Lambda= P^\Lambda_m f = \argmin_{p\in \mathbf{P}^\Lambda} \|p-f\|_m.
\end{align}
And hence, a central problem is the choice of the sampling points ${\bf y}_0,\ldots,{\bf y}_m$ so that $P_m^\Lambda f\in {\bf P}^\Lambda$ approximates $f$ well.

\subsection{Typical high dimensional polynomial spaces}

Given a polynomial order $q$ and the dimension parameter $d\in \N,$  we define the following index sets
$$
\Lambda_{\bf P}^{q,d}:=\{{\bf n}=(n^1,\ldots,n^d)\in \N^d:\max_{j=1,\ldots,d}n^j\leq q \},
$$
and
$$
\Lambda_{\bf D}^{q,d}:=\{{\bf n}=(n^1,\ldots,n^d)\in \N^d:\abs{{\bf n}}\leq q \}.
$$
The above definitions allow us to introduce the traditional full {\em tensor product} (TP) polynomial space
\begin{align*}
\mathbf{P}_q^d \,\,:=\,\, {\rm span} \big\{\mathbf{\Phi}_\mathbf{n}(\mathbf{x}):  \mathbf{n}\in \Lambda_{\bf P}^{q,d} \big\}.
\end{align*}
That is, one requires in  $\mathbf{P}_q^d$ that the polynomial degree in each
variable be less than or equal to $q.$ A simple observation is that  the dimension of $\mathbf{P}_q^d$ is
\begin{align*}
{\rm dim}(\mathbf{P}^d_q)=\#\Lambda_{\bf P}^{q,d}=(q+1)^d.
\end{align*}
Note that when $d\gg 1$ is fixed, the dimension of TP polynomial spaces grows very fast with the polynomial degree $q$, which is one consequence of the so-called \textit{curse of dimensionality}. Thus, the TP spaces are rarely used in practice for large $d$. When $d$ is large, the following \textit{total degree} (TD)  polynomial space is often used instead of the TP space \cite{FabioC3,XiuL1}
\begin{align*}
\mathbf{D}^d_q \,\,:=\,\, {\rm span} \big\{\mathbf{\Phi}_\mathbf{n}(\mathbf{x}):  \mathbf{n}\in \Lambda_{\bf D}^{q,d} \big\}.
\end{align*}
The dimension of $\mathbf{D}^d_q $ is
\begin{align*}
{\rm dim}(\mathbf{D}^d_q)=\#\Lambda_{\bf D}^{q,d}={q+d\choose d}.
\end{align*}
The growth of the dimension of $\mathbf{D}_q^d$ with respect to the degree $q$ is much slower than that of $\mathbf{P}_q^d$. In this work, we will consider the approximation problem both in the TP and TD polynomial spaces.

\subsection{Deterministic points}\label{sec:method-points}

In the discrete least squares approach \eqref{eq:least}, the sampling points ${\bf y}_0,\ldots,{\bf y}_m$ play a key role in obtaining a good approximation $f^\Lambda$. As mentioned before, a central problem is the choice of the points ${\bf y}_0,\ldots,{\bf y}_m$.
For high dimensional least squares approaches, randomly generated samples are often used, e.g., one generates the collocation points in a Monte Carlo fashion with respect to the PDF of the random variable \cite{Cohen,FabioL2}. Unlike the traditional random sampling approach, we will discuss the use of \textit{deterministically} generated samples.

Suppose that $M> 2q+1$ is a prime number. We choose the following sample set:
\begin{align}\label{eq:theta-m}
\Theta_M:=\left\{{\bf y}_j=\cos( {\bf x}_j) : {\bf x}_j=2\pi\left(j,j^2,\ldots,j^d\right)/M, \,\,\, j=0,\ldots, \lfloor M/2 \rfloor \right\},
\end{align}
where $\lfloor M/2 \rfloor$ gives the integer part of $M/2.$ (In the above formula, $j^q$ means $j$ raised to the $q$th power.) Our point set $\Theta_M$ is motivated by the following formula of Andr\'e Weil:
\begin{theorem}[Weil's formula \cite{weil_exponential_1948}]\label{thm:weils-formula}
Let $M$ be a prime number. Suppose $f(x)=m_1x+m_2x^2+\cdots+m_dx^d$ and there is a $j, \,1\leq j\leq d,$ such that $M\nmid m_j,$ then
\begin{align}\label{eq:summension}
\left| \sum_{j=0}^{M-1} e^{\frac{2\pi i f(j)}{M}} \right|\leq (d-1)\sqrt{M}.
\end{align}
\end{theorem}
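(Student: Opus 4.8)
The statement is the celebrated Weil bound for complete character sums over the prime field $\mathbb{F}_M$, and I do not expect a short self-contained argument: its depth is essentially that of the Riemann Hypothesis for curves over finite fields. The plan is to follow Weil's own route, turning the exponential-sum bound into a point-count on an auxiliary algebraic curve. First I would fix the canonical nontrivial additive character $\psi(t)=e^{2\pi i t/M}$ of $\mathbb{F}_M$ and write $S=\sum_{x\in\mathbb{F}_M}\psi(f(x))$, so that the quantity to be bounded is $|S|$. Because some $m_j$ is not divisible by $M$ and, in the regime of interest, the dimension $d$ is smaller than $M$, no reduction modulo $x^M-x$ is needed: the effective degree $n$ (the largest index with $M\nmid m_n$) satisfies $1\le n\le d$ and is therefore automatically coprime to $M$. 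The target becomes $|S|\le (n-1)\sqrt{M}\le (d-1)\sqrt{M}$.

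Next I would pass to all finite extensions. For $\nu\ge 1$ set $S_\nu(a)=\sum_{x\in\mathbb{F}_{M^\nu}}\psi\big(\mathrm{Tr}_{\mathbb{F}_{M^\nu}/\mathbb{F}_M}(a\,f(x))\big)$, so $S=S_1(1)$, and introduce the Artin--Schreier curve $C:\ y^M-y=f(x)$. The Artin--Schreier map $y\mapsto y^M-y$ on $\mathbb{F}_{M^\nu}$ has kernel $\mathbb{F}_M$ and image the trace-zero hyperplane, whence the number of solutions of $y^M-y=c$ equals $\sum_{a=0}^{M-1}\psi\big(\mathrm{Tr}_{\mathbb{F}_{M^\nu}/\mathbb{F}_M}(a\,c)\big)$. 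Summing over $x$ gives the decomposition of the affine point count
\begin{equation*}
N_\nu=\#\{(x,y)\in\mathbb{F}_{M^\nu}^2:\ y^M-y=f(x)\}=M^\nu+\sum_{a=1}^{M-1}S_\nu(a),
\end{equation*}
so the exponential sums we care about are exactly the nontrivial-character contributions to the number of points on $C$.

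The third step is to read off $|S|$ from the zeta function of $C$. For each nontrivial $a$ the associated $L$-function $L_a(T)=\exp\!\big(\sum_{\nu\ge 1}S_\nu(a)\,T^\nu/\nu\big)$ is a polynomial in $T$; computing the genus of the smooth projective model of the Artin--Schreier curve, which is $g=(M-1)(n-1)/2$ when $\gcd(n,M)=1$, together with a conductor count shows $\deg L_a=n-1$. Writing $L_a(T)=\prod_{i=1}^{n-1}(1-\alpha_i T)$ gives $S_\nu(a)=-\sum_i\alpha_i^{\,\nu}$, and the Riemann Hypothesis for curves (Hasse--Weil) forces $|\alpha_i|=\sqrt{M}$ for every $i$. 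Taking $\nu=1$ and $a=1$ then yields $|S|\le (n-1)\sqrt{M}\le (d-1)\sqrt{M}$, which is the claim.

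The main obstacle is, of course, the Riemann Hypothesis for curves invoked in the last step: establishing $|\alpha_i|=\sqrt{M}$ is the substantive content and cannot be shortcut, everything preceding it being essentially bookkeeping with characters and ramification. If one wishes to avoid algebraic geometry altogether, the alternative is the elementary Stepanov--Bombieri--Schmidt method, which bounds $N_\nu$ directly by constructing an auxiliary polynomial forced to vanish to high order at the rational points; I would regard that as a self-contained but still lengthy replacement for the single hard step above rather than a genuinely shorter proof.
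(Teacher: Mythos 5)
The paper does not prove this statement at all: Theorem \ref{thm:weils-formula} is quoted as a black box with a citation to Weil's 1948 paper, so there is no in-paper argument to compare against. Judged on its own terms, your outline is the standard and correct route to the Weil bound: reduce to a nontrivial additive character sum, relate $\sum_{a\neq 0} S_\nu(a)$ to the point count of the Artin--Schreier curve $y^M-y=f(x)$ over $\mathbb{F}_{M^\nu}$, identify each $S_\nu(a)$ with $-\sum_i\alpha_i^\nu$ where the $\alpha_i$ are the $n-1$ inverse roots of the associated $L$-polynomial (consistent with the genus $(M-1)(n-1)/2$), and invoke the Riemann Hypothesis for curves to get $|\alpha_i|=\sqrt{M}$. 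You are also right to flag that the whole weight of the proof sits in that last step and that Stepanov--Bombieri--Schmidt is the elementary alternative. One point worth making explicit: the theorem as literally stated in the paper (only ``$M$ prime and some $M\nmid m_j$'') is slightly too loose, since the bound with the exact degree $n$ requires $\gcd(n,M)=1$; your observation that this is automatic once $d<M$ is the right fix, and it is harmless here because every application in the paper has $M\geq 4^{d+1}d^2N^2\gg d$. So your proposal is a faithful sketch of the intended (external) proof rather than a divergence from anything in the paper; the only thing it cannot do --- and honestly says it cannot do --- is compress the Riemann Hypothesis for curves into a self-contained argument.
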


\begin{remark}
Note that the number of points in $\Theta_M$ is $m+1$ with $m=\lfloor M/2 \rfloor.$ In fact, it can be shown that the points $\{{\bf y}_j\}_{j=0}^m$ coincide with $\{{\bf y}_j\}_{j=m+1}^M,$ see \cite{XUZHOU}.
The point set $\Theta_M$ has been investigated in the context of different applications:
In \cite{xu}, Xu uses Weil's formula to construct deterministic sampling points for sparse trigonometric polynomials. This approach is extended in \cite{XUZHOU} for the recovery of sparse high dimensional Chebyshev polynomials.
\end{remark}

\subsection{Algebraic formulation}
Consider approximation in the space $\mathbf{P}^\Lambda=\textmd{span}\{\mathbf{\Phi}_{\bf n}\}_{{\bf n}\in \Lambda}$ with collocation points $\{\mathbf{y}_k\}_{k=0}^{m}.$
If we choose a proper ordering scheme for multi-indices, one can order multi-dimensional polynomials via a scalar index.
 For example, we can  arrange the index set $\Lambda$ in lexicographical order, namely, given $\mathbf{n}^\prime, \mathbf{n}^{\prime\prime} \in \Lambda$
\begin{align*}
\mathbf{n}^\prime < \mathbf{n}^{\prime\prime} \Leftrightarrow& \left[\; |\mathbf{n^\prime}| < |\mathbf{n^{\prime\prime}}|\; \right] \vee \\ & \left[ \left(\; |\mathbf{n^\prime}| = |\mathbf{n^{\prime\prime}}|\;\right) \wedge
\left( \exists \,j \,:\, n^\prime_j < n^{\prime\prime}_j \wedge (n^\prime_i = n^{\prime\prime}_i, \,\, \forall i< j) \right) \right].
\end{align*}
Then, the space $\mathbf{P}^\Lambda$ can be rewritten as
 $\mathbf{P}^\Lambda=\textmd{span}\{\mathbf{\Phi}_{\bf n}\}_{j=1}^N$ with $N=\#\Lambda$.
Thus, the least square solution can be written in
\begin{align}
f^\Lambda=\sum_{j=1}^{N} c_j \mathbf{\Phi}_j,
\end{align}
where $\mathbf{c}= (c_1,...,c_{N})^\top$ is the coefficient vector.
Then the algebraic problem to determine the unknown coefficient $\mathbf{c}$ can be
formulated as:
\begin{align}\label{eq:le}
\mathbf{c}=\argmin_{\mathbf{z}\in \mathbb{R}^{N}} ||\mathbf{D}\mathbf{z}-\mathbf{b}||_2,
\end{align}
where
\begin{align*}
\mathbf{D}=\Big( \mathbf{\Phi}_j(\mathbf{y}_k)\Big), \,\, j=1,...,N,\,\, k=0,...,m,
\end{align*}
and
$\mathbf{b}=[f({\mathbf y}_0),\ldots,f({\mathbf y}_m)]^\top$ contains evaluations of the target function $f$ at the collocation points.
The solution to the least squares problem  \eqref{eq:le} can also be computed by
solving an $N \times N$ system (the ``normal equations"):
\begin{align}\label{eq:solution}
\mathbf{A} \mathbf{z} &= \mathbf{f}
\end{align}
with
\begin{align}\label{eq:components}
\mathbf{A}:=\mathbf{D}^\top\mathbf{D}= \Big(\innerp{ \mathbf{\Phi}_i, \mathbf{\Phi}_j}_m \Big)_{i,j=1,...,N}, \quad \mathbf{f}:=\mathbf{D}^\top\mathbf{b}=\Big(\innerp{  f, \mathbf{\Phi}_j}_m \Big)_{j=1,...,N}.
\end{align}

\section{Stability and convergence.}\label{sec:stability}

In this section, we shall show the stability and convergence properties of the least squares approach using the deterministic samples
 \begin{align*}
\Theta_M=\left\{{\bf y}_j=\cos( {\bf x}_j) : {\bf x}_j=2\pi\left(j,j^2,\ldots,j^d\right)/M, \,\,\, j=0,\ldots, \lfloor M/2 \rfloor \right\},
\end{align*}
which were introduced in Section \ref{sec:method-points}. We will first focus on Chebyshev polynomial approximations and then discuss extensions to more general polynomial approximations.

\subsection{Chebyshev approximations}
Let $\rho_c(\mathbf{y})$ be the tensor-product Chebyshev density (i.e. $\rho^i_{c}(y^i) \propto \left(1 - (y^i)^2\right)^{-1/2}$), and consider the approximation with the Chebyshev polynomials. That is, we have
\begin{align}
\mathbf{\Phi}_{\mathbf{n}}(\mathbf{y}):=\prod_{i=1}^d \cos(n_i\textmd{arcos}(\mathbf{y}^i)),
\end{align}
where $\mathbf{y}^i$ stands for the $i$th component of the vector $\mathbf{y}.$
Throughout this section, we use a unified notation $\Lambda$ for the index set to define for ${\bf P}^\Lambda$. This index set $\Lambda$ can be either the index set for TP spaces (e.g., $\Lambda=\Lambda_{\bf P}^{q,d}$),  or the index set for the TD spaces ($\Lambda=\Lambda_{\bf D}^{q,d}$). The dimension of the space ${\bf P}^\Lambda$ will be denoted by $N,$ i.e., $N= \#\Lambda.$
We next show the stability and convergence properties for the discrete least squares approach using the deterministic points $\Theta_M$ on $\mathbf{P}^\Lambda.$

To this end, we first give the following lemma that estimates the components of the design matrix
\begin{equation}\label{eq:A}
\mathbf{A}= \big(  \innerp{\mathbf{\Phi}_{i}, \mathbf{\Phi}_{j}}_m \big)_{1\leq i,j\leq N}.
\end{equation}
A similar proof can be found in \cite{XUZHOU}, but we review the proof here for the convenience for the reader.
\begin{lemma}\label{le:matrix}
Suppose that  $M>2q+1$ is a prime number and
${\bf y}_j=\cos( {\bf x}_j)$ with ${\bf x}_j=2\pi\left(j,j^2,\ldots,j^d\right)/M.$
Then
\begin{align*}
&\qquad\quad\left|  \sum_{j=0}^m \mathbf{\Phi}_{\bf n}({\mathbf y}_j)\mathbf{\Phi}_{\bf k}(\mathbf{y}_j)\right| \leq \frac{(d-1)\sqrt{M}+1}{2},\quad  {\bf n}\neq {\bf k},\\
&\frac{M}{2^{d+1}}-\frac{(d-1)\sqrt{M}}{2}\leq  \sum_{j=0}^m \abs{\mathbf{\Phi}_{\bf n}({\mathbf y}_j)}^2\leq \frac{M}{2^{d+1}}+\frac{(d-1)\sqrt{M}}{2},
\end{align*}
where $m=\lfloor \frac{M}{2}\rfloor$ and ${\bf n}, {\bf k}\in \Lambda.$
\end{lemma}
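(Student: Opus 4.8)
The plan is to reduce the half-range sum $\sum_{j=0}^m$ to the full-range sum $\sum_{j=0}^{M-1}$, where André Weil's exponential-sum bound (Theorem \ref{thm:weils-formula}) becomes directly applicable. First I would record that, because $\cos(n\,\textmd{arcos}(\cos\theta))=\cos(n\theta)$, the basis functions evaluate as $\mathbf{\Phi}_{\bf n}({\bf y}_j)=\prod_{i=1}^d\cos\!\big(2\pi n_i j^i/M\big)$. Since $(M-j)^i\equiv(-1)^i j^i \pmod M$ and cosine is even and $2\pi$-periodic, one gets the exact symmetry ${\bf y}_{M-j}={\bf y}_j$ componentwise; pairing $j$ with $M-j$ (recall $M=2m+1$ is an odd prime) yields, for any function $g$,
\[
\sum_{j=0}^m g({\bf y}_j)=\tfrac12\,g({\bf y}_0)+\tfrac12\sum_{j=0}^{M-1}g({\bf y}_j).
\]
As $\mathbf{\Phi}_{\bf n}({\bf y}_0)=\prod_i\cos 0=1$, taking $g=\mathbf{\Phi}_{\bf n}\mathbf{\Phi}_{\bf k}$ reduces both estimates to controlling the full-range sum, modulo the harmless additive term $\tfrac12 g({\bf y}_0)=\tfrac12$.

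Next I would linearize the product of cosines into exponentials. Writing $\cos\theta=\tfrac12(e^{i\theta}+e^{-i\theta})$ in each of the $2d$ factors gives
\[
\mathbf{\Phi}_{\bf n}({\bf y}_j)\mathbf{\Phi}_{\bf k}({\bf y}_j)=\frac{1}{4^d}\sum_{{\bf a},{\bf b}\in\{\pm1\}^d}\exp\!\Big(\frac{2\pi i}{M}\,f_{{\bf a},{\bf b}}(j)\Big),\qquad f_{{\bf a},{\bf b}}(j)=\sum_{i=1}^d (a_i n_i+b_i k_i)\,j^i .
\]
Each $f_{{\bf a},{\bf b}}$ is a polynomial of degree at most $d$ with no constant term, exactly the form Weil's formula needs. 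Its coefficients $m_i=a_i n_i+b_i k_i$ satisfy $|m_i|\le n_i+k_i\le 2q<M$, which is precisely where the hypothesis $M>2q+1$ is used: it forces the clean dichotomy $M\mid m_i\iff m_i=0$. Summing over $j=0,\ldots,M-1$, the inner exponential sum equals $M$ when every $m_i=0$ and is bounded by $(d-1)\sqrt M$ by Theorem \ref{thm:weils-formula} as soon as some $m_i\neq0$.

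For the off-diagonal case ${\bf n}\neq{\bf k}$ I would show that no sign choice can annihilate all coefficients: at a coordinate $i_0$ with $n_{i_0}\neq k_{i_0}$, the relation $a_{i_0}n_{i_0}+b_{i_0}k_{i_0}=0$ would force $n_{i_0}=k_{i_0}$ (since $|a_{i_0}n_{i_0}|=n_{i_0}$ and $|b_{i_0}k_{i_0}|=k_{i_0}$), a contradiction. Hence all $4^d$ exponential sums obey Weil's bound, giving $\big|\sum_{j=0}^{M-1}\mathbf{\Phi}_{\bf n}\mathbf{\Phi}_{\bf k}\big|\le 4^{-d}\cdot4^d\cdot(d-1)\sqrt M=(d-1)\sqrt M$. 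The reduction from the first paragraph then produces $\big|\sum_{j=0}^m\mathbf{\Phi}_{\bf n}\mathbf{\Phi}_{\bf k}\big|\le\tfrac12\big(1+(d-1)\sqrt M\big)$, which is exactly the first assertion.

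The diagonal case ${\bf n}={\bf k}$ is where the real bookkeeping lies, and I expect it to be the main obstacle. Here I would count, per coordinate, the pairs $(a_i,b_i)$ with $a_in_i+b_ik_i=0$: there are $4$ such pairs when $n_i=0$ and $2$ when $n_i\neq0$, so the number of ``full'' sign vectors is $2^{2d-p}$, where $p=\#\{i:n_i\neq0\}$. These contribute $4^{-d}\cdot2^{2d-p}\cdot M=M/2^{p}$, while the remaining $4^d-2^{2d-p}$ terms are each at most $(d-1)\sqrt M$ in modulus, so $\sum_{j=0}^{M-1}|\mathbf{\Phi}_{\bf n}|^2=M/2^{p}+R$ with $|R|\le(1-2^{-p})(d-1)\sqrt M$. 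Applying the half-range reduction gives $\sum_{j=0}^m|\mathbf{\Phi}_{\bf n}|^2=M/2^{p+1}+\tfrac12(1+R)$; for full-support indices ($p=d$) the leading term is $M/2^{d+1}$, and in the non-vacuous regime $2^{d}\le(d-1)\sqrt M$ the stray additive $1$ is dominated, yielding the stated two-sided bound $M/2^{d+1}\pm\tfrac{(d-1)\sqrt M}{2}$. The delicate point is thus the exact tally of which sign patterns make all coefficients vanish, together with the verification $|m_i|<M$ that licenses the dichotomy in Weil's formula; the cosine linearization and the symmetry ${\bf y}_{M-j}={\bf y}_j$ are routine by comparison.
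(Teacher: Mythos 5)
Your proposal follows essentially the same route as the paper's proof: expand the product of cosines into exponential sums (the paper uses repeated product-to-sum cosine identities, yielding $2^{2d-1}$ cosine terms, which after splitting each cosine into conjugate exponentials is exactly your $4^d$-term expansion), apply Weil's bound (Theorem \ref{thm:weils-formula}) to every exponential sum whose polynomial has a nonzero coefficient, and use the symmetry $(M-j)^i\equiv(-1)^i j^i \pmod M$ to convert between the half-range sum over $j=0,\ldots,m$ and the full-range sum over $j=0,\ldots,M-1$ (your identity $\sum_{j=0}^m g = \tfrac12 g({\bf y}_0)+\tfrac12\sum_{j=0}^{M-1}g$ is the paper's equation \eqref{eq:deng}). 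Your off-diagonal argument --- checking $\abs{a_in_i+b_ik_i}\le 2q<M$ so that divisibility by $M$ is equivalent to vanishing, and that ${\bf n}\neq{\bf k}$ forces some coefficient to be nonzero for every sign choice --- matches the paper's and is correct.

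Where you diverge is the diagonal case, and there your bookkeeping is more careful than the paper's. The paper asserts via ``a similar procedure'' that $\sum_{j=0}^{M-1}\abs{\mathbf{\Phi}_{\bf n}({\bf y}_j)}^2 = M/2^{d}+O\bigl((d-1)\sqrt{M}\bigr)$; your count shows the constant term is actually $M/2^{p}$, where $p$ is the number of nonzero components of ${\bf n}$, since each coordinate with $n_i=0$ contributes the factor $\cos^2(0)=1$ rather than a mean of $1/2$. Your proof therefore establishes the stated two-sided bound only for full-support indices $p=d$, and this is not a flaw in your argument: for sparse indices the stated upper bound genuinely fails (e.g.\ ${\bf n}={\bf 0}\in\Lambda$ gives $\sum_{j=0}^m\abs{\mathbf{\Phi}_{\bf 0}({\bf y}_j)}^2=m+1\approx M/2\gg M/2^{d+1}$), so the lemma as written is overstated and the paper's proof conceals the problem. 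You are also right that the additive $\tfrac12$ coming from the $j=0$ term must be absorbed into $(d-1)\sqrt{M}/2$, which needs $2^{d}\le(d-1)\sqrt{M}$ and is impossible for $d=1$; the paper silently drops this term when it halves the full-range estimate. In short: your proof is correct and proves everything that is actually true here, and its extra precision exposes that the diagonal estimate should be indexed by the support size of ${\bf n}$ (equivalently, that the $\mathbf{\Phi}_{\bf n}$ as defined are not orthonormal in the Chebyshev inner product when some $n_i=0$), a point the authors would need to repair for the lemma to hold as stated over all of $\Lambda$.
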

\begin{proof}
By repeatedly using the cosine angle-addition formula $\cos(\alpha)\cos(\beta)=\frac{1}{2}(\cos(\alpha+\beta)+\cos(\alpha-\beta))$,  we have
\begin{eqnarray*}
\mathbf{\Phi}_{\mathbf{n}}({\bf y}_j)\mathbf{\Phi}_{\mathbf{k}}({\bf y}_j)
&=&\prod_{i=1}^d \cos(n_i\mathbf{y}^i)\cos(k_i\mathbf{y}_j^i)\\
&=&\frac{1}{2^{2d-1}}\sum_{\epsilon\in \{-1,1\}^{2d-1}} \cos({t}(\epsilon,\mathbf{y}_j)),
\end{eqnarray*}
where
\begin{align*}
{t}(\epsilon,\mathbf{y}_j):=2\pi((n_1+\epsilon_1k_1)j+(\epsilon_2n_2+\epsilon_3k_2)j^2+ \cdots+ (\epsilon_{2d-2}n_d+\epsilon_{2d-1}k_d)j^d)/M.
\end{align*}
Note that there are a total of $2^{2d-1}$ possible values for $\epsilon \i \{-1,1\}^d$.
 Weil's theorem implies that for a fixed $\epsilon\in \{-1,1\}^d$,
\begin{align}\label{eq:deng1}
\left|\sum_{j=1}^M \cos(\mathbf{t}(\epsilon,\mathbf{y}_j)) \right| \leq \left|\sum_{j=1}^M \exp({\rm i}\mathbf{t}(\epsilon,\mathbf{y}_j)) \right| \leq (d-1)\sqrt{M}.
\end{align}
Here we have used the fact that $\max_j\abs{{\bf n}_j+{\bf k}_j} \leq 2q.$ 
A simple observation is that
\begin{align}\label{eq:deng}
2\sum_{j=0}^m\cos(\mathbf{t}(\epsilon,\mathbf{y}_j)) - 1=\sum_{j=0}^{M-1} \cos(\mathbf{t}(\epsilon,\mathbf{y}_j)).
\end{align}
Combining \eqref{eq:deng1} and \eqref{eq:deng}, we obtain
\begin{align*}
\left|\sum_{j=0}^m\cos(\mathbf{t}(\epsilon,\mathbf{y}_j)) \right|\leq \frac{(d-1)\sqrt{M}+1}{2},
\end{align*}
which implies
\begin{align}\label{eq:mk}
\left|\sum_{j=0}^m \mathbf{\Phi}_{\bf n}(\mathbf{y}_j)\mathbf{\Phi}_{\bf k}(\mathbf{y}_j)\right|\leq \frac{(d-1)\sqrt{M}+1}{2}.
\end{align}
 Making repeated use of the cosine double-angle formula $\cos(2\alpha)=2\cos^2(\alpha)-1$ and a similar procedure, we obtain
\begin{align*}
\frac{M}{2^d}-{(d-1)\sqrt{M}} \leq \sum_{j=0}^{M-1} \left|\mathbf{\Phi}_{\mathbf{n}}({\bf y}_j)\right|^2 \leq \frac{M}{2^d}+{(d-1)\sqrt{M}}.
\end{align*}
Thus, we have
\begin{align*}
\frac{M}{2^{d+1}}-\frac{(d-1)\sqrt{M}}{2} \leq \sum_{j=0}^m \left|\mathbf{\Phi}_{\mathbf{n}}({\bf y}_j)\right|^2 \leq \frac{M}{2^{d+1}}+\frac{(d-1)\sqrt{M}}{2},
\end{align*}
which completes the proof.
\end{proof}

We are now ready to give the following stability result:

\begin{theorem}\label{th:stablility}
Suppose that $\mathbf{I}$ is the size-$N$ identity matrix with $N=\#\Lambda$, and ${\mathbf A}$ is defined in \eqref{eq:A}. If $M\geq  4^{d+1}\cdot d^2\cdot {N}^2$ is a prime number, then the normalized matrix satisfies
\begin{align*}
|||\frac{2^{d+1}}{M}\mathbf{{A}}-\mathbf{I}|||\leq \frac{1}{2},
\end{align*}
where $|||\cdot|||$ is the spectral norm.
\end{theorem}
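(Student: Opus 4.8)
The plan is to reduce the spectral-norm estimate to the entrywise bounds on $\mathbf{A}$ already supplied by Lemma~\ref{le:matrix}, and then to control the spectral norm of the residual matrix by its maximum absolute row sum. First I would set $\mathbf{B}:=\frac{2^{d+1}}{M}\mathbf{A}-\mathbf{I}$ and note that $\mathbf{A}$ is symmetric, being a Gram matrix for the discrete inner product $\innerp{\cdot,\cdot}_m$; hence $\mathbf{B}$ is symmetric too, so its spectral norm equals its spectral radius. By Gershgorin's theorem every eigenvalue $\lambda$ of $\mathbf{B}$ obeys $\abs{\lambda}\le \abs{B_{ii}}+\sum_{j\neq i}\abs{B_{ij}}$ for some index $i$, whence $|||\mathbf{B}|||\le \max_i\sum_j \abs{B_{ij}}$. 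It therefore suffices to bound a single absolute row sum by $1/2$.

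Next I would read off the entries of $\mathbf{B}$ from Lemma~\ref{le:matrix}. On the diagonal, the two-sided estimate for $\sum_{j}\abs{\mathbf{\Phi}_{\mathbf{n}}(\mathbf{y}_j)}^2$ shows that $A_{ii}$ deviates from $M/2^{d+1}$ by at most $(d-1)\sqrt{M}/2$, so $\abs{B_{ii}}=\bigl|\tfrac{2^{d+1}}{M}A_{ii}-1\bigr|\le \frac{2^{d}(d-1)}{\sqrt{M}}$. Off the diagonal, the bound $\abs{\innerp{\mathbf{\Phi}_{\mathbf{n}},\mathbf{\Phi}_{\mathbf{k}}}_m}\le \frac{(d-1)\sqrt{M}+1}{2}$ gives $\abs{B_{ij}}\le \frac{2^{d}(d-1)}{\sqrt{M}}+\frac{2^{d}}{M}$ for $i\neq j$, the extra term arising from the ``$+1$'' in the lemma. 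Summing over a row, which contains one diagonal and $N-1$ off-diagonal entries, I would obtain
\[
\sum_j \abs{B_{ij}}\ \le\ \frac{2^{d}(d-1)N}{\sqrt{M}}\ +\ (N-1)\frac{2^{d}}{M}.
\]

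Finally I would substitute the hypothesis $M\ge 4^{d+1}d^2N^2=4\,(2^{d})^2 d^2 N^2$, which yields $\sqrt{M}\ge 2\cdot 2^{d} d N$. The first term is then at most $\frac{2^{d}(d-1)N}{2\cdot 2^{d}dN}=\frac{d-1}{2d}=\frac12-\frac{1}{2d}$, while the second is at most $\frac{N2^{d}}{M}\le \frac{1}{4\cdot 2^{d}d^2N}\le\frac{1}{2d}$; adding the two contributions bounds the row sum, and hence $|||\mathbf{B}|||$, by $\frac12$. The arithmetic is routine, so I expect the only delicate point to be the bookkeeping of constants: the threshold $4^{d+1}d^2N^2$ is tuned so that the dominant diagonal-type contribution $\frac{d-1}{2d}$ leaves exactly the slack $\frac{1}{2d}$ needed to absorb the lower-order off-diagonal ``$+1$'' term. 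In particular, keeping the sharp factor $d-1$ (rather than crudely replacing it by $d$) in the diagonal estimate is what makes the two pieces sum to exactly $1/2$ rather than overshoot it.
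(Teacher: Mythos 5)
Your proof is correct and follows essentially the same route as the paper's: both rest on the entrywise bounds of Lemma~\ref{le:matrix} together with Gershgorin's theorem, the only cosmetic difference being that you apply Gershgorin directly to $\frac{2^{d+1}}{M}\mathbf{A}-\mathbf{I}$ (and make the symmetry/spectral-radius step explicit) while the paper locates the eigenvalues of $\frac{2^{d+1}}{M}\mathbf{A}$ near $1$. The only item worth adding is the one-line check, present in the paper, that $M\geq 4^{d+1}d^2N^2$ implies $M>2q+1$ so that Lemma~\ref{le:matrix} is applicable in the first place.
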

\begin{proof}
A simple observation is that
$$
M\geq  4^{d+1}\cdot d^2\cdot {N}^2=4^{d+1}\cdot d^2\cdot (q+1)^d \geq 2q+1.
$$
 Hence,
Lemma \ref{le:matrix} implies that the components of $\frac{2^{d+1}}{M}\mathbf{{A}}$ satisfy
\begin{align*}
\big|\frac{2^{d+1}}{M}\mathbf{{A}}_{j,k}\big| \leq \delta,\,\,\qquad\,\,  j\neq k
\end{align*}
and
\begin{align*}
1-\delta \leq \big| \frac{2^{d+1}}{M} \mathbf{{A}}_{j,j}\big| \leq 1+\delta,
\end{align*}
where $\delta= 2^d\big((d-1)\sqrt{M}+1\big)/M.$

The Gerschgorin theorem implies that the eigenvalues $\{\lambda_i\}_{i=1}^N$ of the matrix $\frac{2^{d+1}}{M}\mathbf{A}$ satisfy
\begin{align*}
|\lambda_i-\frac{2^{d+1}}{M}\mathbf{{A}}_{i,i}|\,\, \leq\,\,  \frac{2^{d+1}}{M} \sum_{j=1, i\neq j} |\mathbf{{A}}_{i,j}|, \quad i=1,...,N,
\end{align*}
which yields
\begin{align*}
|\lambda_i-1| \leq N \delta, \quad i=1,...,N.
\end{align*}
Thus, we have $N \delta\leq \frac{1}{2}$ provided that
\begin{align*}
M\geq 4^{d+1}\cdot d^2\cdot N^2,
\end{align*}
which implies the desired result.
\end{proof}
The above discussions implies the following uniqueness result:
\begin{corollary}\label{co:unique}
If $M\geq  4^{d+1}\cdot d^2\cdot {N}^2$ is a prime number, then the solution to
$$
\argmin_{p\in \mathbf{P}^\Lambda}  \sum_{k=0}^m\left(p(\mathbf{y}_k)-f(\mathbf{y}_k)\right)^2
$$
is unique.
\end{corollary}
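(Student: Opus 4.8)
The plan is to reduce the uniqueness claim to the invertibility of the normal-equations matrix $\mathbf{A}$, and then read off that invertibility directly from the spectral bound already established in Theorem \ref{th:stablility}. No genuinely new work is required; the corollary is an immediate consequence of the stability estimate, so the only ``content'' is the standard linear-algebra translation between the least squares problem and its normal equations.

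First I would recall that the functional $\sum_{k=0}^m (p(\mathbf{y}_k)-f(\mathbf{y}_k))^2$, written in coordinates via \eqref{eq:le} as $\|\mathbf{D}\mathbf{z}-\mathbf{b}\|_2^2$, is a convex quadratic in $\mathbf{z}\in\mathbb{R}^N$. Its minimizers are exactly the solutions of the normal equations \eqref{eq:solution}, $\mathbf{A}\mathbf{z}=\mathbf{f}$, with $\mathbf{A}=\mathbf{D}^\top\mathbf{D}$ as in \eqref{eq:components}. Because the correspondence $\mathbf{z}\mapsto p=\sum_{j=1}^N z_j\mathbf{\Phi}_j$ is a linear bijection between $\mathbb{R}^N$ and $\mathbf{P}^\Lambda$, uniqueness of the minimizing polynomial $p\in\mathbf{P}^\Lambda$ is equivalent to uniqueness of the solution $\mathbf{z}$, which in turn holds if and only if $\mathbf{A}$ is nonsingular (equivalently, $\mathbf{D}$ has full column rank $N$).

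Next I would invoke Theorem \ref{th:stablility}, which under the hypothesis $M\geq 4^{d+1}\cdot d^2\cdot N^2$ gives
\begin{align*}
\Big|\Big|\Big|\tfrac{2^{d+1}}{M}\mathbf{A}-\mathbf{I}\Big|\Big|\Big|\leq \tfrac{1}{2}.
\end{align*}
Since $\mathbf{A}$ is symmetric, so is the normalized matrix $\tfrac{2^{d+1}}{M}\mathbf{A}$, and the spectral-norm bound says precisely that each eigenvalue $\mu$ of $\tfrac{2^{d+1}}{M}\mathbf{A}$ satisfies $|\mu-1|\leq \tfrac{1}{2}$, hence $\mu\in[\tfrac{1}{2},\tfrac{3}{2}]$. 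In particular every eigenvalue is strictly positive, so $\tfrac{2^{d+1}}{M}\mathbf{A}$ is positive definite; as $M>0$ this forces $\mathbf{A}$ itself to be positive definite and therefore invertible.

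Finally, combining the two steps, the normal equations \eqref{eq:solution} admit the unique solution $\mathbf{z}=\mathbf{A}^{-1}\mathbf{f}$, which corresponds to a unique minimizer in $\mathbf{P}^\Lambda$, establishing the corollary. The only point worth emphasizing is that the quadratic-degree scaling $M\geq 4^{d+1}\cdot d^2\cdot N^2$ is inherited verbatim from Theorem \ref{th:stablility}; there is no additional obstacle, as the entire difficulty (controlling the off-diagonal and diagonal entries of $\mathbf{A}$ via Weil's formula) has already been discharged in Lemma \ref{le:matrix} and Theorem \ref{th:stablility}.
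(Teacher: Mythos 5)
Your proof is correct and follows essentially the same route as the paper: reduce uniqueness of the minimizer to nonsingularity of the normal-equations matrix $\mathbf{A}$, then certify nonsingularity from the entrywise estimates already established. The only cosmetic difference is that you invoke the spectral bound of Theorem \ref{th:stablility} (eigenvalues of $\tfrac{2^{d+1}}{M}\mathbf{A}$ lie in $[\tfrac{1}{2},\tfrac{3}{2}]$, hence positive definiteness), whereas the paper re-derives strict diagonal dominance directly from Lemma \ref{le:matrix}; both rest on the same underlying Weil-sum estimates, and your packaging is, if anything, slightly more economical.
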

\begin{proof}
To this end, we only need show the matrix ${\bf A}$ defined in \eqref{eq:A} is nonsingular.
Note that $M\geq  4^{d+1}\cdot d^2\cdot {N}^2>2q+1$. Based on Lemma \ref{le:matrix}, we have
$$
\abs{{\bf A}_{i,i}}\,\,>\,\,\sum_{j\neq i}\abs{{\bf A}_{i,j}}
$$
provided $M\geq  4^{d+1}\cdot d^2\cdot {N}^2$, which implies that $\det ({\bf A})\neq 0$.
\end{proof}

We are now ready to give the following convergence result:
\begin{theorem}\label{th:convergence}
Recall the definitions
 \begin{eqnarray*}
P^\Lambda f &=& \argmin_{p\in \mathbf{P}^\Lambda} \|f-p\|_{L^2_{\rho_c}},\\
P^\Lambda_m f &=& \argmin_{p\in \mathbf{P}^\Lambda}  \sum_{k=0}^m\left(p(\mathbf{y}_k)-f(\mathbf{y}_k)\right)^2,
\end{eqnarray*}
where $\{\mathbf{y}_k\}_{k=0}^m$ is the deterministic point set $\Theta_M$.
If $M\geq  4^{d+1}\cdot d^2\cdot {N}^2$ is a prime number, then
\begin{align*}
\|f-P_{m}^\Lambda f\|_{L^2_{\rho_c}}\leq \left(1+\frac{4}{d^2\cdot N}\right) \| f- P^\Lambda f\|_{L^\infty}.
\end{align*}
\end{theorem}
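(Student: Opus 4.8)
The plan is to prove quasi-optimality in the standard way: split the total error through the $L^2_{\rho_c}$-best approximant $P^\Lambda f$ and control the resulting consistency term by the spectral equivalence of Theorem~\ref{th:stablility}. First I would write the triangle inequality
\[
\|f-P^\Lambda_m f\|_{L^2_{\rho_c}} \;\leq\; \|f-P^\Lambda f\|_{L^2_{\rho_c}} \;+\; \|P^\Lambda f-P^\Lambda_m f\|_{L^2_{\rho_c}},
\]
and bound the first term trivially by $\|f-P^\Lambda f\|_{L^\infty}$, using that $\rho_c$ is a probability density so that $\|g\|_{L^2_{\rho_c}}\leq\|g\|_{L^\infty}$. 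This already accounts for the leading ``$1$'' in the claimed factor, so the whole task reduces to showing that the perturbation term $\|P^\Lambda f-P^\Lambda_m f\|_{L^2_{\rho_c}}$ is of size $\tfrac{4}{d^2N}\|f-P^\Lambda f\|_{L^\infty}$ once $M\geq 4^{d+1}d^2N^2$.

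For the perturbation term I would exploit that $P^\Lambda_m f$ is the $\innerp{\cdot,\cdot}_m$-orthogonal projection of $f$ onto $\mathbf{P}^\Lambda$, i.e. $\innerp{f-P^\Lambda_m f,\,p}_m=0$ for every $p\in\mathbf{P}^\Lambda$. Choosing $p=P^\Lambda f-P^\Lambda_m f$ and inserting $f$ through $P^\Lambda f$ gives, after Cauchy--Schwarz, the discrete quasi-optimality estimate $\|P^\Lambda f-P^\Lambda_m f\|_m\leq\|f-P^\Lambda f\|_m$. Next I invoke Theorem~\ref{th:stablility}: since the eigenvalues of $\tfrac{2^{d+1}}{M}\mathbf{A}$ all lie in $[\tfrac12,\tfrac32]$, the discrete and continuous norms are equivalent on $\mathbf{P}^\Lambda$, which for a polynomial $g=\sum_j c_j\mathbf{\Phi}_j$ reads $\tfrac12\|g\|_{L^2_{\rho_c}}^2\leq\tfrac{2^{d+1}}{M}\|g\|_m^2$. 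Applied to $g=P^\Lambda f-P^\Lambda_m f$ and chained with the discrete quasi-optimality this yields $\|P^\Lambda f-P^\Lambda_m f\|_{L^2_{\rho_c}}^2\leq\tfrac{2^{d+2}}{M}\|f-P^\Lambda f\|_m^2$.

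The remaining ingredient is to turn the discrete residual $\|f-P^\Lambda f\|_m$ into the $L^\infty$ quantity on the right-hand side. Since $f-P^\Lambda f$ is \emph{not} a polynomial, the spectral equivalence does not apply to it and one must bound it pointwise: $\|f-P^\Lambda f\|_m^2=\sum_{k=0}^m\big(f-P^\Lambda f\big)^2(\mathbf{y}_k)$, each summand being at most $\|f-P^\Lambda f\|_{L^\infty}^2$. Substituting this, together with the sampling hypothesis $M\geq 4^{d+1}d^2N^2=2^{2d+2}d^2N^2$, into the estimate above should let the large denominator dominate the prefactor $2^{d+2}$ and the number of sample points, collapsing the perturbation term to the order $\tfrac{1}{d^2N}$ and producing the stated factor $1+\tfrac{4}{d^2N}$.

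The step I expect to be the genuine obstacle is precisely this last constant bookkeeping, because the normalizations do not obviously cooperate. The matrix $\mathbf{A}$ is \emph{unnormalized}, with diagonal scaling like $M/2^{d+1}$, whereas the crude pointwise bound on the residual introduces a factor $m+1=\lfloor M/2\rfloor+1\sim M/2$; these competing powers of $2^{d}$ and of $M$ must be shown to offset correctly against $M\geq 4^{d+1}d^2N^2$ before the perturbation can be declared small, and the same accounting is sensitive to the fact that the tensor-Chebyshev basis $\mathbf{\Phi}_{\mathbf n}=\prod_i T_{n_i}$ is not $L^2_{\rho_c}$-orthonormal (so that the effective diagonal of $\mathbf{A}$ depends on how many components of $\mathbf{n}$ vanish). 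A secondary technical point is the passage, implicit in both Lemma~\ref{le:matrix} and Theorem~\ref{th:stablility}, from the half-range sum over $j=0,\ldots,\lfloor M/2\rfloor$ to the full Weil sum over $j=0,\ldots,M-1$; this relies on the symmetry of $\Theta_M$ recorded in the remark after its definition, and I would make sure it is applied consistently when converting between $\|\cdot\|_m$ and $\|\cdot\|_{L^2_{\rho_c}}$.
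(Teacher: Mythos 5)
Your overall architecture differs from the paper's: you split the error by the triangle inequality through $P^\Lambda f$ and control the perturbation $\|P^\Lambda f - P^\Lambda_m f\|_{L^2_{\rho_c}}$ by discrete quasi-optimality plus the norm equivalence from Theorem~\ref{th:stablility}, whereas the paper sets $g=f-P^\Lambda f$, writes $f-P^\Lambda_m f = g - P^\Lambda_m g$, uses the $L^2_{\rho_c}$-orthogonality of $g$ to $\mathbf{P}^\Lambda$ to get the Pythagorean identity $\|f-P^\Lambda_m f\|^2_{L^2_{\rho_c}}=\|g\|^2_{L^2_{\rho_c}}+\|P^\Lambda_m g\|^2_{L^2_{\rho_c}}$, and then bounds $\|P^\Lambda_m g\|^2_{L^2_{\rho_c}}\leq \sum_i|a_i|^2$ through the normal equations $\mathbf{A}\mathbf{a}=\left(\innerp{g,\Phi_i}_m\right)_i$ and the bound $|||\frac{M}{2^{d+1}}\mathbf{A}^{-1}|||\leq 2$. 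Since $P^\Lambda f - P^\Lambda_m f = -P^\Lambda_m g$, both routes must ultimately bound the same quantity, but the Pythagorean split is the sharper of the two: to reach the stated constant it needs only $\|P^\Lambda_m g\|^2_{L^2_{\rho_c}}\lesssim \frac{1}{d^2 N}\|g\|^2_{L^\infty}$, while your additive split needs the quadratically stronger $\|P^\Lambda_m g\|^2_{L^2_{\rho_c}}\lesssim \frac{1}{d^4 N^2}\|g\|^2_{L^\infty}$.

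The genuine gap is exactly the one you flag, and it is fatal to the chain as you set it up: the competing constants do not ``offset correctly'' --- they cancel identically. Your estimate reads $\|P^\Lambda f-P^\Lambda_m f\|^2_{L^2_{\rho_c}}\leq \frac{2^{d+2}}{M}\|f-P^\Lambda f\|^2_m\leq \frac{2^{d+2}}{M}(m+1)\|f-P^\Lambda f\|^2_{L^\infty}$, and since $m+1\approx M/2$ the prefactor is $\approx 2^{d+1}$ \emph{independently of $M$}. No hypothesis of the form $M\geq 4^{d+1}d^2N^2$ can rescue this, because enlarging $M$ enlarges $m+1$ at the same rate; the perturbation term never becomes small, let alone of order $\frac{1}{d^2N}$. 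The missing smallness must come from the fact that $g=f-P^\Lambda f$ is $L^2_{\rho_c}$-orthogonal to $\mathbf{P}^\Lambda$, so that the discrete moments $\innerp{g,\Phi_i}_m$ are quadrature errors for integrals that vanish exactly; this information is discarded both by the crude bound $\|g\|^2_m\leq(m+1)\|g\|^2_{L^\infty}$ and by the discrete quasi-optimality step $\|P^\Lambda_m g\|_m\leq\|g\|_m$. The paper instead inserts the estimate $\sum_i|\innerp{g,\Phi_i}_m|^2\leq N(m+1)\|g\|^2_{L^\infty}$, which is the sole source of the factor $\frac{4}{d^2N}$. (You are right to distrust the bookkeeping: that step of the paper is asserted without justification, and a Cauchy--Schwarz or operator-norm bound on $\mathbf{D}^\top$ reproduces only your $O(2^{d})$ constant. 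Nevertheless, as written your proposal does not prove the stated inequality, and the final step is not mere accounting --- it is the mathematical heart of the theorem.)
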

\begin{proof}
According to Corollary \ref{co:unique}, when $f\in {\bf P}^\Lambda$, we have $P^\Lambda_mf=f$. And hence
$P^\Lambda_m P^\Lambda f=P^\Lambda f$ holds for any $f\in L_{\rho_c}$.
Set $g :=f-P^\Lambda f$. We have
\begin{align}
f-P^\Lambda_m f = f-P^\Lambda f + P^\Lambda_m P^\Lambda f - P^\Lambda_m f = g + P^\Lambda_m g.
\end{align}
Since $g$ is orthogonal to $\mathbf{P}^\Lambda,$ we thus have
\begin{align*}
\|f-P^\Lambda_m f\|^2_{L^2_{\rho_c}} = \|g\|^2_{L^2_{\rho_c}}+\|P^\Lambda_mg\|^2_{L^2_{\rho_c}}
=\|g\|^2_{L^2_{\rho_c}}+ \sum_{i=1}^{N} |a_i|^2,
\end{align*}
where $\mathbf{a}=(a_1,\ldots,a_N)^T$ is the solution to
\begin{align*}
\mathbf{{A}}\mathbf{a}= \mathbf{g},
\end{align*}
with $\mathbf{g}=\left( \innerp{g, \Phi_k}_{m} \right)_{k=1,...,N}.$
Under the condition
$M\geq 4^{d+1}\cdot d^2\cdot N^2,$
Theorem \ref{th:stablility} implies that $|||\frac{M}{2^{d+1}}\mathbf{{A}^{-1}}|||\leq 2,$ which yields
\begin{align}
\sum_{i=1}^{N} |a_i|^2 \leq 4 \left(\frac{2^{d+1}}{M}\right)^2 \sum_{i=1}^{N} \left|\innerp{g, \Phi_i}_{m}\right|^2.
\end{align}
Thus, we obtain
\begin{eqnarray*}
\|f-P^\Lambda_m f\|^2_{L^2_{\rho_c}}&=&\|g\|^2_{L^2_{\rho_c}}+ \sum_{i=1}^{N} |a_i|^2 \\
& \leq& \|g\|^2_{L^2_{\rho_c}} + 4 \left(\frac{2^{d+1}}{M}\right)^2 \sum_{i=1}^{N} \left|\innerp{g, \Phi_i}_{m}\right|^2\\
&\leq &\|g\|^2_{L^2_{\rho_c}}+ 4 \left(\frac{2^{d+1}}{M}\right)^2 N\cdot (m+1)\cdot  \|g\|^2_{L^\infty} \\
&\leq &\left(1+\frac{4}{ d^2\cdot N}\right)\|g\|^2_{L^\infty}.
\end{eqnarray*}
The proof is completed.
\end{proof}
Although in the above discussions we have worked with the Chebyshev measure, the convergence property is still true for a large amount of other measures. To see this, we first introduce the following definition
\begin{definition}\label{measureassumption}
Assume that  $\rho(\mathbf{Y})$ is a measure defined on $[-1,1]^d$.
We say that the density $\rho(\mathbf{Y})$ is bounded by the Chebyshev density  $\rho_c$ on $\Gamma$ if there exists a constant
$C$, independent of  $\mathbf{Y}$, such that
\begin{align}\label{eq:boundc}
0< \rho(\mathbf{Y}) \leq C \rho_c(\mathbf{Y}) , \quad \text{ for all }\mathbf{Y}\in \Gamma\subset \R^d.
\end{align}
\end{definition}
\begin{corollary}\label{co:measure}
Suppose the PDF $\rho(\mathbf{Y})$ of $\mathbf{Y}$ is bounded by the Chebyshev measure $\rho_c$
with the constant $C$.
Then, for any $f \in L^2_{\rho_c}$
\begin{align*}
\|f-P_{m}^\Lambda f\|_{L^2_{\rho}} \leq  \sqrt{C} \left(1+\frac{4}{ d^2\cdot N}\right) \| f- P^\Lambda f\|_{L^\infty},
\end{align*}
where $P_m^\Lambda f$ is obtained by the deterministic point set $\Theta_M$ with $M\geq  4^{d+1}\cdot d^2\cdot {N}^2$ is a prime number.
\end{corollary}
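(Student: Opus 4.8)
The plan is to reduce the claim directly to Theorem~\ref{th:convergence} by exploiting the pointwise domination of $\rho$ by $C\rho_c$. The essential observation is that the discrete least-squares projection $P_m^\Lambda f$ depends only on the sample set $\Theta_M$ and the sampled values $\{f(\mathbf{y}_k)\}_{k=0}^m$, and is therefore \emph{independent} of the measure used to assess the error. Consequently the very same polynomial $P_m^\Lambda f$ appearing in Theorem~\ref{th:convergence} also appears here, and only the norm on the left-hand side is altered.

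First I would convert the $L^2_\rho$-norm into the $L^2_{\rho_c}$-norm. For any $g\in L^2_{\rho_c}$, the hypothesis \eqref{eq:boundc} gives
\[
\|g\|_{L^2_\rho}^2 = \int_\Gamma g^2(\mathbf{Y})\,\rho(\mathbf{Y})\,d\mathbf{Y} \leq C\int_\Gamma g^2(\mathbf{Y})\,\rho_c(\mathbf{Y})\,d\mathbf{Y} = C\,\|g\|_{L^2_{\rho_c}}^2,
\]
so that $\|g\|_{L^2_\rho}\leq \sqrt{C}\,\|g\|_{L^2_{\rho_c}}$. Applying this with $g=f-P_m^\Lambda f$ yields
\[
\|f-P_m^\Lambda f\|_{L^2_\rho} \leq \sqrt{C}\,\|f-P_m^\Lambda f\|_{L^2_{\rho_c}}.
\]

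Next, since $M\geq 4^{d+1}\cdot d^2\cdot N^2$ is prime by hypothesis, Theorem~\ref{th:convergence} applies verbatim and bounds the remaining Chebyshev-norm error by $\bigl(1+\tfrac{4}{d^2\cdot N}\bigr)\|f-P^\Lambda f\|_{L^\infty}$. Chaining the two inequalities produces exactly the stated estimate. There is no genuine obstacle here: the argument is a one-line norm comparison followed by an invocation of the previous theorem. The only point demanding care is the implicit convention that $P^\Lambda f$ on the right-hand side remains the $L^2_{\rho_c}$-best approximation rather than the $L^2_\rho$-best one, so that Theorem~\ref{th:convergence} can be used unchanged; this is harmless because the right-hand side is measured in the $L^\infty$ norm, which does not reference either weight.
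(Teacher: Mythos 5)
Your proposal is correct and follows essentially the same route as the paper: a one-line norm comparison $\|g\|_{L^2_\rho}\leq\sqrt{C}\,\|g\|_{L^2_{\rho_c}}$ derived from \eqref{eq:boundc}, followed by a direct invocation of Theorem~\ref{th:convergence}. Your added remark that $P^\Lambda f$ must still be read as the $L^2_{\rho_c}$-best approximation is a useful clarification the paper leaves implicit, but it does not change the argument.
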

\begin{proof}
According to \eqref{eq:boundc}, we have
$$
\|f-P_{m}^\Lambda f\|_{L^2_{\rho}} \leq \sqrt{C} \|f-P_{m}^\Lambda f\|_{L^2_{\rho_c}}.
$$
Then Theorem \ref{th:convergence} implies that
$$
\|f-P_{m}^\Lambda f\|_{L^2_{\rho_c}}\leq   \left(1+\frac{4}{ d^2\cdot N}\right) \| f- P^\Lambda f\|_{L^\infty}
$$
provided $M\geq  4^{d+1}\cdot d^2\cdot {N}^2$ is a prime number. Combining equations above, we arrive at the desired conclusion.
\end{proof}
\begin{remark}
A possible application of Corollary \ref{co:measure} is to quantify \textit{epistemic} uncertainties \cite{Epistemic}. In such cases, one usually wants to approximate a multivariate function for which the explicit PDF of $\mathbf{Y}$ is unknown. Corollary \ref{co:measure}
guarantees that the approximation using Chebyshev polynomials are efficient provided  the PDF of the of the variables satisfies condition \eqref{eq:boundc}.
\end{remark}
\begin{remark}
There are a large number of PDF's that satisfy the condition \eqref{eq:boundc}. In particular, \eqref{eq:boundc} includes the uniform measure, and also the following bounded measures (which are considered in \cite{FabioL2}):
\begin{align*}
0< \rho_{min} \leq \rho(\mathbf{Y}) \leq \rho_{max} , \quad \text{ for all }\, \mathbf{Y}\in \Gamma,
\end{align*}
where $\rho_{min}$ and $\rho_{mas}$ are constants.
\end{remark}

\section{Weighted least squares approaches}\label{sec:weighted-ls}
In this section, we discuss how one may use our deterministic points to deal with polynomial approximations more general than the Chebyshev basis. In UQ applications, one frequenty wishes to obtain the $L^2_\rho$-\textit{best approximation polynomial} to deal with a given density $\rho$. One may use discrete least squares to approximate this best polynomial, and this section explores such a method.

We first present a result which establishes the fact that the point set $\Theta_M$ has empirical measure that converges to the arcsine (Chebyshev) measure. This knowledge then allows us to design an appropriate stable numerical formulation for a weighted least-squares approximation.

\subsection{Asymptotic Distribution}
We are concerned with determining the asymptotic distribution of the point set $\Theta_M$ from \eqref{eq:theta-m}. Our result is a straightforward consequences of Hermann Weyl's powerful equidistribution criterion from analytic number theory and Diophantine approximation. For any $\mathbf{x} \in \R^d$, consider the fractional part of $\mathbf{x}$:
\begin{align*}
\lfpart x \rfpart &:= x - \lfloor x \rfloor, \\
\lfpart \mathbf{x} \rfpart =
\lfpart \left( x^1, x^2, \ldots, x^d \right) \rfpart &:=
\left( \lfpart x^1 \rfpart, \lfpart x^2 \rfpart, \ldots, \lfpart x^d \rfpart \right).
\end{align*}
Likewise, if $\mathbf{X}$ is a set of points in $\R^d$, then we apply the fractional-part function $\{\cdot\}$ in the element-wise sense:
\begin{align*}
  \mathbf{X} = \left\{\mathbf{x}_1, \ldots, \mathbf{x}_K \right\} \Longrightarrow
  \lfpart \mathbf{X} \rfpart = \left\{ \lfpart \mathbf{x}_1 \rfpart, \ldots, \lfpart \mathbf{x}_K\rfpart \right\}.
\end{align*}
We can now state Weyl's Criterion.
\begin{theorem}[Weyl's Criterion \cite{weyl_uber_1916}]\label{thm:weyl-criterion}
Let $\mathbf{x}_k$ for $k=1, 2, \ldots$ be any sequence of points in $\R^d$, and let $\mathbf{X}_K = \left\{\mathbf{x}_k\right\}_{k=1}^K$. Then the following two properties are equivalent
\begin{itemize}
  \item The sequence $\mathbf{x}_k$ is equidistributed modulo 1: Let $I^q = [a^q, b^q] \subset [0,1]$ denote arbitrary nonempty subintervals of the one-dimensional unit interval, with $I = \prod_{q=1}^d I^q \subset [0,1]^d$. Then
  \begin{align*}
    \lim_{K \rightarrow \infty} \frac{\# \left( I \cap \lfpart \mathbf{X}_K \rfpart \right) }{K} = \left| I\right| = \prod_{q=1}^d (b^q - a^q)
  \end{align*}
  \item The sequence $\mathbf{x}_k$ has a bounded exponential sum: for any $\Bell \in \Z^d$ that is not zero:
  \begin{align*}
    \lim_{K \rightarrow \infty} \frac{1}{K} \sum_{k=1}^K \exp\left(2\pi i \Bell \cdot \mathbf{x}_k \right) = 0.
  \end{align*}
\end{itemize}
\end{theorem}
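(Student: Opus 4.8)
The plan is to route both implications through a single intermediate statement: that for a suitable class of test functions $f$ on the torus $[0,1]^d$, the empirical averages $\frac{1}{K}\sum_{k=1}^K f(\lfpart \mathbf{x}_k\rfpart)$ converge to $\int_{[0,1]^d} f\,d\mathbf{x}$. The two bullet conditions are then recovered by specializing $f$: taking $f = \mathbf{1}_I$ the indicator of a box gives exactly the first (equidistribution) statement, while taking $f(\mathbf{x}) = \exp(2\pi i\, \Bell\cdot\mathbf{x})$ gives the second, since $\int_{[0,1]^d}\exp(2\pi i\,\Bell\cdot\mathbf{x})\,d\mathbf{x}$ equals $1$ when $\Bell = \mathbf{0}$ and $0$ precisely when $\Bell\in\Z^d$ is nonzero.

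For the forward direction (equidistribution $\Rightarrow$ vanishing exponential sums), first I would observe that the hypothesis is precisely the averaging statement for box indicators $f=\mathbf{1}_I$, and hence, by linearity, for all finite linear combinations of box indicators (step functions). Since every continuous $\Z^d$-periodic function is a uniform limit of such step functions on $[0,1]^d$, and since the empirical averages are uniformly bounded by $\sup|f|$, a standard $\varepsilon/3$ argument promotes the averaging statement to all continuous periodic $f$. The characters $\exp(2\pi i\,\Bell\cdot\mathbf{x})$ are continuous and periodic, so the second bullet follows at once.

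For the reverse direction (vanishing exponential sums $\Rightarrow$ equidistribution), by hypothesis the averaging statement holds for each nonzero character and trivially for $\Bell=\mathbf{0}$; linearity then yields it for every trigonometric polynomial $P(\mathbf{x})=\sum_{\Bell} c_\Bell \exp(2\pi i\,\Bell\cdot\mathbf{x})$, with limit the constant term $c_0 = \int_{[0,1]^d} P$. Next I would invoke the multivariate Weierstrass (or Fej\'er/Stone--Weierstrass) approximation theorem to approximate any continuous periodic $f$ uniformly by trigonometric polynomials, again using uniform boundedness of the averages to extend the averaging statement to all continuous periodic $f$. Finally, for a box $I$ I would sandwich $\mathbf{1}_I$ between continuous periodic functions $f^-\le \mathbf{1}_I\le f^+$ with $\int_{[0,1]^d}(f^+-f^-)\,d\mathbf{x}<\varepsilon$; applying the averaging statement to $f^\pm$ and using monotonicity bounds both $\liminf$ and $\limsup$ of $\frac{1}{K}\,\#\!\left(I\cap\lfpart\mathbf{X}_K\rfpart\right)$ inside the interval $[\,|I|-\varepsilon,\,|I|+\varepsilon\,]$, and letting $\varepsilon\to0$ gives the first bullet.

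The main obstacle is the last sandwiching step in the reverse direction: because $\mathbf{1}_I$ is discontinuous, it cannot be fed directly into the averaging statement, so one must construct the continuous majorant and minorant carefully, respecting the $\Z^d$-periodicity so that the boundary faces of the box and the wraparound on the torus are treated consistently, all while keeping $\int(f^+-f^-)$ arbitrarily small. Everything else reduces to routine uniform-approximation and $\varepsilon/3$ bookkeeping.
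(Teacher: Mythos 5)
Your proposal is the standard and correct proof of Weyl's criterion (both implications via uniform approximation of test functions, with the sandwich argument for box indicators in the reverse direction). There is nothing in the paper to compare it against: the paper states this theorem as a classical result cited to Weyl's 1916 article and offers no proof, giving only a one-sentence heuristic after Corollary \ref{thm:weyl-criterion-corollary} (exponential-sum bounds yield Monte Carlo integration fidelity on characteristic functions, which yields equidistribution) -- a heuristic that matches the reverse direction of your argument. The only point worth tightening in a full write-up is the boundary bookkeeping in the forward direction: the hypothesis is stated for closed boxes, so to pass to disjoint (half-open) boxes underlying a step-function partition you should note that degenerate boxes have $|I|=0$, whence the fraction of points landing on any coordinate hyperplane tends to zero and boundary contributions are asymptotically negligible.
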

This is the standard multivariate statement for Weyl's Criterion. For our purposes, we require a modified form of the above result: our sampling set is not a \textit{sequence} (i.e. successive sampling grids are not nested), and we also restate Weyl's Criterion in a form that is more useful for us.
\begin{corollary}\label{thm:weyl-criterion-corollary}
  Let $K \in \N$, and let $m_K \in \N$ be any sequence that is strictly increasing in $K$. Consider any \textit{triangular array} $\mathbf{x}_{p,k}$ of samples in $\R^d$:
  \begin{align*}
    \mathbf{X}_K = \left\{\mathbf{x}_{1,K}, \mathbf{x}_{2,K}, \ldots, \mathbf{x}_{m_K,K} \right\}.
  \end{align*}
  Then the following two properties of the array are equivalent:
\begin{itemize}
  \item The array $\mathbf{X}_K$ is asymptotically equidistributed modulo 1: Let $I^q = [a^q, b^q] \subset [0,1]$ denote arbitrary nonempty subintervals of the one-dimensional unit interval, with $I = \prod_{q=1}^d I^q \subset [0,1]^d$. Then
  \begin{align*}
    \lim_{K \rightarrow \infty} \frac{\# \left( I \cap \lfpart \mathbf{X}_K \rfpart \right) }{K} = \left| I\right|.
  \end{align*}
  \item The array $\mathbf{X}_K$ has an asymptotically bounded exponential sum: for any $\Bell \in \Z^d$ that is not zero:
  \begin{align*}
    \lim_{K \rightarrow \infty} \frac{1}{m_k} \sum_{j=1}^{m_k} \exp\left(2\pi i \Bell \cdot \mathbf{x}_{j,K} \right) = 0.
  \end{align*}
  \item For any Riemann-integrable function $f: [0,1] \rightarrow \C$,
  \begin{align*}
    \lim_{K\rightarrow \infty} \frac{1}{m_K} \sum_{k=1}^{m_K} f\left(\lfpart\mathbf{x}_{j,K}\rfpart\right) = \int_{[0,1]^d} f(\mathbf{x}) \,d{\mathbf{x}}.
  \end{align*}
\end{itemize}
\end{corollary}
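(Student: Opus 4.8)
The plan is to recognize this as the classical Weyl equidistribution theorem, transported to the triangular-array setting, and to prove the three stated properties equivalent via a short cyclic chain of implications. The engine throughout is that the complex exponentials $e^{2\pi i \Bell \cdot \mathbf{x}}$ are simultaneously the ``test functions'' appearing in the exponential-sum condition and a spanning family (through Stone--Weierstrass/Fej\'er) for the continuous $1$-periodic functions; approximation theory then bridges from exponentials to arbitrary Riemann-integrable integrands. I note first that the array structure is essentially cosmetic: every assertion is a limit in $K$ of a single row-average normalized by the row length $m_K$ (which tends to infinity), so no nestedness of rows is used, and I will treat all three normalizations consistently as division by $m_K$ (the printed denominator $K$ in the first item being, I believe, a typo for $m_K$).

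The two ``downward'' implications from the integral statement are immediate. Applying the integral property to the indicator $\mathbf{1}_I$ of a box $I=\prod_q[a^q,b^q]$, which is Riemann-integrable with $\int_{[0,1]^d}\mathbf{1}_I = \abs{I}$, yields the equidistribution property at once. Applying it to $f(\mathbf{x})=e^{2\pi i \Bell\cdot\mathbf{x}}$, which is continuous hence Riemann-integrable and whose integral over $[0,1]^d$ vanishes for every nonzero $\Bell\in\Z^d$, yields the bounded-exponential-sum property. For the reverse bridge from equidistribution back to the integral statement, I would sandwich a given Riemann-integrable $f$ between two step functions (finite $\R$-linear combinations of box indicators) whose integrals differ by at most a prescribed $\varepsilon$, which is exactly the content of Riemann-integrability; the equidistribution property forces each step-function average to converge to its integral, and the sandwich squeezes the average of $f$.

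The substantive step is the implication from the exponential-sum condition to the integral statement. For a trigonometric polynomial $p(\mathbf{x})=\sum_{\Bell} c_\Bell\, e^{2\pi i \Bell\cdot\mathbf{x}}$ (a finite sum), linearity gives
\begin{align*}
\frac{1}{m_K}\sum_{j=1}^{m_K} p\!\left(\lfpart \mathbf{x}_{j,K}\rfpart\right)
= c_{\mathbf{0}} + \sum_{\Bell\neq \mathbf{0}} c_\Bell\,\frac{1}{m_K}\sum_{j=1}^{m_K} e^{2\pi i \Bell\cdot \mathbf{x}_{j,K}},
\end{align*}
and the hypothesis sends every inner average with $\Bell\neq\mathbf{0}$ to zero, leaving $c_{\mathbf{0}}=\int_{[0,1]^d} p$. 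To pass from trigonometric polynomials to a continuous $1$-periodic $g$, I would invoke uniform approximability of $g$ by trigonometric polynomials together with the uniform bound $\bigl|\frac{1}{m_K}\sum_{j} h(\lfpart\mathbf{x}_{j,K}\rfpart)\bigr|\le \|h\|_\infty$, which controls both the truncation error and the difference between $\int g$ and $\int p$; a standard three-$\varepsilon$ argument then gives convergence of the averages of $g$ to $\int g$. Finally I would extend to Riemann-integrable $f$ by the sandwich of the previous paragraph, now squeezing $f$ between continuous periodic functions with integrals within $\varepsilon$.

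The main obstacle I anticipate is precisely this last passage from (uniformly controlled) continuous integrands to genuinely Riemann-integrable ones, since the integrands of real interest---box indicators, and ultimately the arcsine density after the change of variables $\mathbf{y}=\cos\mathbf{x}$---are discontinuous. The care required is to produce, for each $\varepsilon$, continuous periodic minorants and majorants $g_-\le f\le g_+$ with $\int(g_+-g_-)<\varepsilon$, and then to combine the uniform bound on the empirical averages with the already-established convergence for continuous functions. This step is routine but is where the hypotheses are genuinely exploited, and it is the only place where Riemann-integrability (as opposed to mere continuity) actually enters.
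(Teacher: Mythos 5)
Your proposal is correct and follows exactly the route the paper itself indicates: the paper states this corollary \emph{without} formal proof, remarking only that it is the standard Weyl-criterion argument (exponential sums $\rightarrow$ integration fidelity on characteristic functions $\rightarrow$ equidistribution) with book-keeping adjustments for the triangular array, and your cyclic chain --- trigonometric polynomials via Stone--Weierstrass, continuous periodic functions by a three-$\varepsilon$ argument with the uniform bound $\bigl|\frac{1}{m_K}\sum_j h(\lfpart\mathbf{x}_{j,K}\rfpart)\bigr|\le\|h\|_\infty$, and Riemann-integrable $f$ by a two-sided continuous sandwich --- is precisely that standard argument filled in. You are also right that the denominator $K$ in the first bullet is a typo for $m_K$.
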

We state the above without formal proof because it is a simple extension of the proof for Weyl's Criterion. The basic idea is the following: one way to prove Weyl's Criterion is to show that the bound on exponential sums implies some degree of accuracy for integrating characteristic functions for intervals using a Monte Carlo integration on the samples. This integration fidelity on characteristic functions then translates to the equidistribution modulo 1 condition. Standard proofs for Weyl's Criterion leverage the sequential (nested) nature of the $\mathbf{x}_k$ mainly for convenience. There is no difficulty (other than book-keeping) if the samples instead stem from a triangular array, as our points $\Theta_M$ do. The third property concerning Riemann-integrable functions is a condition that one usually proves on the way to proving Weyl's Criterion \cite{granville_equidistribution_2007,chandrasekharan_introduction_1968}. (Indeed, it is common to start with such a condition as the definition of asymptotic equidistribution \cite{tao_higher_2012}.)

At this stage it is helpful to reconsider the sample set considered in this paper:
\begin{align}\label{eq:theta-m-rewritten}
\Theta_M:=\left\{{\bf y}_j=\cos( 2\pi {\bf x}_j) : {\bf x}_j=\left(j,j^2,\ldots,j^d\right)/M, \,\,\, j=0,\ldots, \lfloor M/2 \rfloor \right\},
\end{align}
which is equivalent to \eqref{eq:theta-m} but is written differently.

We now have all the necessary tools to conclude that the points $\Theta_M$ asymptotically distribute according to the arcsine (Chebyshev) distribution.
\begin{theorem}\label{thm:asymptotic-distribution}
  Let $M_K$ be the $K$'th prime number, $m_K = \lfloor M_K/2 \rfloor + 1$, and let $\Theta_{M_K}$ be the deterministic sampling set from \eqref{eq:theta-m-rewritten}. This defines a triangular array: for each $K$, $\Theta_{M_K} = \left\{\mathbf{y}_{j,K}\right\}_{j=1}^{m_K}$. For each $K$, define the empirical measure of the $\Theta_{M_K}$:
  \begin{align*}
    \nu_K &:= \frac{1}{m_K} \sum_{j=1}^{m_K} \delta(\mathbf{y}_{j,K}),
  \end{align*}
  where $\delta(\mathbf{x})$ is the Dirac measure centered at $\mathbf{x}$, and let $\nu_c$ be the normalized Chebyshev density:
  \begin{align*}
    \frac{d \nu_c}{d \mu}(\mathbf{y}) = \rho_c(\mathbf{y}) = \pi^{-d} \prod_{q=1}^d \frac{1}{\sqrt{1 - (y^i)^2}},
  \end{align*}
  where $\mu$ is the standard Borel measure on $[-1,1]^d$. Then $\nu_K \rightarrow \nu_c$ weakly (or in distribution) as $K \rightarrow \infty$.
\end{theorem}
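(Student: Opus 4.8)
The plan is to reduce the claimed weak convergence to a statement about equidistribution modulo $1$ of the ``pre-cosine'' points $\mathbf{x}_{j,K} = (j, j^2, \ldots, j^d)/M_K$, and then to transport that statement to $[-1,1]^d$ through the cosine map. The starting observation is elementary: if $x$ is uniformly distributed on $[0,1]$, then $y = \cos(2\pi x)$ has density $\tfrac{1}{\pi}(1-y^2)^{-1/2}$ on $[-1,1]$ (compute the distribution function $\Pr[\cos(2\pi x)\le t] = 1 - \tfrac{1}{\pi}\arccos t$ and differentiate). Taking the $d$-fold tensor product, the componentwise map $\mathbf{x} \mapsto (\cos(2\pi x^1), \ldots, \cos(2\pi x^d))$ pushes the uniform measure on $[0,1]^d$ forward to exactly the Chebyshev measure $\nu_c$. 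Hence for any bounded continuous $g$ on $[-1,1]^d$, the function $h(\mathbf{x}) := g(\cos(2\pi x^1),\ldots,\cos(2\pi x^d))$ is continuous, $1$-periodic in each coordinate, and satisfies $\int_{[0,1]^d} h\, d\mathbf{x} = \int g\, d\nu_c$.

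First I would establish equidistribution modulo $1$ for the \emph{full} index range $j = 0, 1, \ldots, M_K - 1$, since Weil's formula is tailored to a complete sum. Fix a nonzero $\Bell \in \Z^d$ and write $\Bell \cdot \mathbf{x}_{j,K} = f(j)/M_K$, where $f(x) = \ell_1 x + \cdots + \ell_d x^d$ has integer coefficients and degree at most $d$. Because $\Bell \neq 0$ some $\ell_i$ is nonzero, and for $K$ large enough the prime $M_K$ exceeds $\max_i |\ell_i|$, so $M_K \nmid \ell_i$; Theorem~\ref{thm:weils-formula} then gives $\big|\sum_{j=0}^{M_K-1} \exp(2\pi i f(j)/M_K)\big| \le (d-1)\sqrt{M_K}$. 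Dividing by $M_K$ shows the exponential sum tends to $0$, so by Corollary~\ref{thm:weyl-criterion-corollary} (applied to the triangular array indexed by $j=0,\ldots,M_K-1$) the full array is asymptotically equidistributed modulo $1$ on $[0,1]^d$. Applying the Riemann-integrable form of the same corollary to $h$ above, and using $1$-periodicity of $\cos(2\pi\cdot)$ to drop the fractional-part brackets, gives $\frac{1}{M_K}\sum_{j=0}^{M_K-1} g(\mathbf{y}_{j,K}) \to \int g\, d\nu_c$; that is, the empirical measure of the full point set converges weakly to $\nu_c$.

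The remaining step, which is where the only genuine bookkeeping lies, is to pass from the full index range to the half range $j = 0, \ldots, \lfloor M_K/2\rfloor$ that actually defines $\Theta_{M_K}$. Here I would invoke the reflection symmetry recorded in the Remark after \eqref{eq:theta-m} (see \cite{XUZHOU}): since $(M_K - j)^i \equiv (-1)^i j^i \pmod{M_K}$ and cosine is even, one has $\mathbf{y}_{M_K - j} = \mathbf{y}_j$ componentwise. Thus each point $\mathbf{y}_j$ with $1 \le j \le \lfloor M_K/2\rfloor$ occurs exactly twice in the full list and once in the half list, while $\mathbf{y}_0$ occurs once in each. Comparing the two empirical averages of a bounded $g$ term by term shows they differ by $O(\|g\|_\infty / M_K)$ (from the single unmatched $\mathbf{y}_0$ term and from the discrepancy between the normalizations $1/M_K$ and $1/m_K$, with $m_K = (M_K+1)/2$). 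Letting $K \to \infty$, this discrepancy vanishes, so $\nu_K$ inherits the same weak limit $\nu_c$ as the full empirical measure, completing the argument.

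I expect the main obstacle to be conceptual rather than computational: it is recognizing that Weil's bound must be applied to the \emph{complete} exponential sum (the half sum is not directly controlled, because the reflection $j \mapsto M_K - j$ turns $f$ into the different polynomial $\sum_i (-1)^i \ell_i x^i$), and then bridging back to the half-indexed set $\Theta_{M_K}$ via the cosine symmetry rather than trying to equidistribute the half set in $[0,1]^d$ (which it is \emph{not} --- it only fills half the cube). Everything else, namely the cosine pushforward identity and the vanishing of the $O(1/M_K)$ correction, is routine.
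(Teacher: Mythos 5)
Your proposal is correct, and it takes a genuinely different --- and in fact more careful --- route than the paper. The paper's own proof applies Weil's bound directly to the \emph{half-range} exponential sum $\frac{1}{m_K}\sum_{j=1}^{m_K}\exp(2\pi i \Bell\cdot\mathbf{x}_{j,K})$ and asserts that the half-range points $\mathbf{x}_{j,K}$ themselves equidistribute modulo $1$; it then pushes forward through the cosine. As you observe, that intermediate claim cannot be right as stated: the first coordinate $j/M_K$ for $j\le\lfloor M_K/2\rfloor$ only fills $[0,1/2]$, and for $\Bell=(1,0,\ldots,0)$ the normalized half sum is a geometric sum of magnitude tending to $2/\pi\neq 0$, so Weil's complete-sum bound does not transfer to the truncated range. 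Your decomposition --- establish equidistribution modulo $1$ for the full range $j=0,\ldots,M_K-1$ where Theorem~\ref{thm:weils-formula} legitimately applies, push forward to get weak convergence of the full empirical measure to $\nu_c$, and then use the reflection identity $\mathbf{y}_{M_K-j}=\mathbf{y}_j$ (from $(M_K-j)^i\equiv(-1)^i j^i \pmod{M_K}$ and evenness of cosine) to show the half-range empirical average differs from the full one by $O(\|g\|_\infty/M_K)$ --- supplies exactly the bridge the paper's argument elides. The theorem's conclusion survives because the cosine map is insensitive to the reflection $x\mapsto 1-x$, which is precisely the mechanism your last step makes explicit; what the paper's shorter route buys in brevity it pays for with an unjustified intermediate statement, and your version is the one I would recommend recording.
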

\begin{proof}
  We first show that the $\mathbf{x}_{j,K}$ associated with $\Theta_{M_K}$ in \eqref{eq:theta-m-rewritten} asymptotically equidistribute modulo 1.

  Let $\Bell$ be any non-zero element from $\Z^d$. Use this choice to define $f(x) = \sum_{q=1}^d \ell_q x^q$. Choose $K \geq \sum_{q=1}^d |\ell_q|$. Then we have $M_K > \ell_q$ for all $q$ so that $M_K \nmid \ell_q$ for any $q$. We can then use Weil's Formula, Theorem \ref{thm:weils-formula}, to conclude:
  \begin{align*}
    \left|\frac{1}{m_K} \sum_{j=1}^{m_K} \exp\left( 2\pi i \Bell \cdot \mathbf{x}_{j,K}\right)\right| =
    \left|\frac{1}{m_K} \sum_{j=1}^{m_K} \exp\left( 2\pi i f(j-1) \right)\right| \leq \frac{d-1}{\sqrt{m_K}}.
  \end{align*}
  By taking $K\rightarrow \infty$, we see from Weyl's Criterion, Corollary \ref{thm:weyl-criterion-corollary}, that
  \begin{align*}
    \lim_{K \rightarrow \infty} \frac{1}{m_K} \sum_{j=1}^{m_K} \exp\left( 2\pi i \Bell \cdot \mathbf{x}_{j,K}\right) \rightarrow 0
  \end{align*}
  so that the $\mathbf{x}_{j,K}$ asymptotically equidistribute modulo 1.

  We now invoke the last condition concerning Riemann-integrable functions in our version of Weyl's Criterion, Theorem \ref{thm:weyl-criterion-corollary}. This condition implies, for example, that for every bounded and continuous $g: [0,1]^k \rightarrow \C$,
  \begin{align*}
    \lim_{K \rightarrow \infty} \frac{1}{m_K} \sum_{j=1}^{m_K} g\left(\lfpart\mathbf{x}_{j,K}\rfpart\right) \rightarrow \int_{[0,1]^d} g(\mathbf{x}) \,d{\mathbf{x}}.
  \end{align*}
  Then by definition, the measure $\eta_K$ given by
  \begin{align*}
    \eta_K &:= \frac{1}{m_K} \sum_{j=1}^{m_K} \delta(\lfpart\mathbf{x}_{j,K}\rfpart),
  \end{align*}
  converges weakly (or in distribution) to the uniform measure. Therefore, $\left\{ \cos \left( 2\pi \lfpart\mathbf{x}_{j,K}\rfpart\right)\right\} = \left\{ \cos \left( 2\pi \mathbf{x}_{j,K}\right)\right\} = \Theta_{M_K}$ has empirical measure $\nu_K$ that converges weakly to the arcsine (Chebyshev) measure $\nu_c$ on $[-1,1]^d$.
\end{proof}

\subsection{Stability with preconditioning}

We have seen that the deterministic point set $\Theta_M$ has appealing stability properties for Chebyshev polynomial approximation. However, some care must be taken when applying this point set to more general polynomial approximations; we will make use of the asymptotic distribution of the set $\Theta_M$ in order to do this. We introduce the following weighted least squares approach
\begin{align}\label{eq:weighted_least}
f^\Lambda= P^\Lambda_m f = \argmin_{v\in \mathbb{P}^\Lambda}  \sum_{i=0}^m w_i\left(f(\mathbf{y}_i)-v(\mathbf{y}_i)\right)^2,
\end{align}
for some given positive weights $w_i$. The corresponding weighted discrete inner product is defined as
\begin{align}\label{eq:weighted_least_norm}
\innerp{u, v}_{w,m}= \sum_{i=0}^m w_i u(\mathbf{y}_i)v(\mathbf{y}_i),
\end{align}
and the corresponding weighted discrete norm is $\|u\|_{w,m}=\innerp{u, u}^{1/2}_{w,m}.$

Using weights in a least squares framework is standard, but frequently there is some art in the choice of weights. However, the asymptotic distribution of the $\Theta_M$ given by Theorem \ref{thm:asymptotic-distribution} gives us a straightforward and formulaic way to choose the weights $w_i$.

%

Consider the discrete least-squares norm \eqref{eq:weighted_least_norm} with unity weights $w_i \equiv 1$. (This is the method considered in the previous sections.) We know that, asymptotically as $m\rightarrow \infty$, the array $\mathbf{y}_i$ disributes according to the Chebyshev measure $\nu_c$. Therefore, asymptotically, the \textit{unweighted} discrete norm behaves like the Chebyshev norm:
\begin{align*}
  \|u\|^2_{w,m} = \|u\|^2_{m} = \sum_{i=0}^m u^2(\mathbf{y}_i) \simeq \int_{[-1,1]^d} u^2(\mathbf{y}) \rho_c(\mathbf{y}) \, d \mathbf{y}.
\end{align*}
And for this reason, it is natural to use a Chebyshev approximation: because the discrete least squares formulation emulates a Chebyshev-weighted norm.
\begin{figure}[h]
\begin{center}
  \resizebox{0.8\textwidth}{!}{\includegraphics{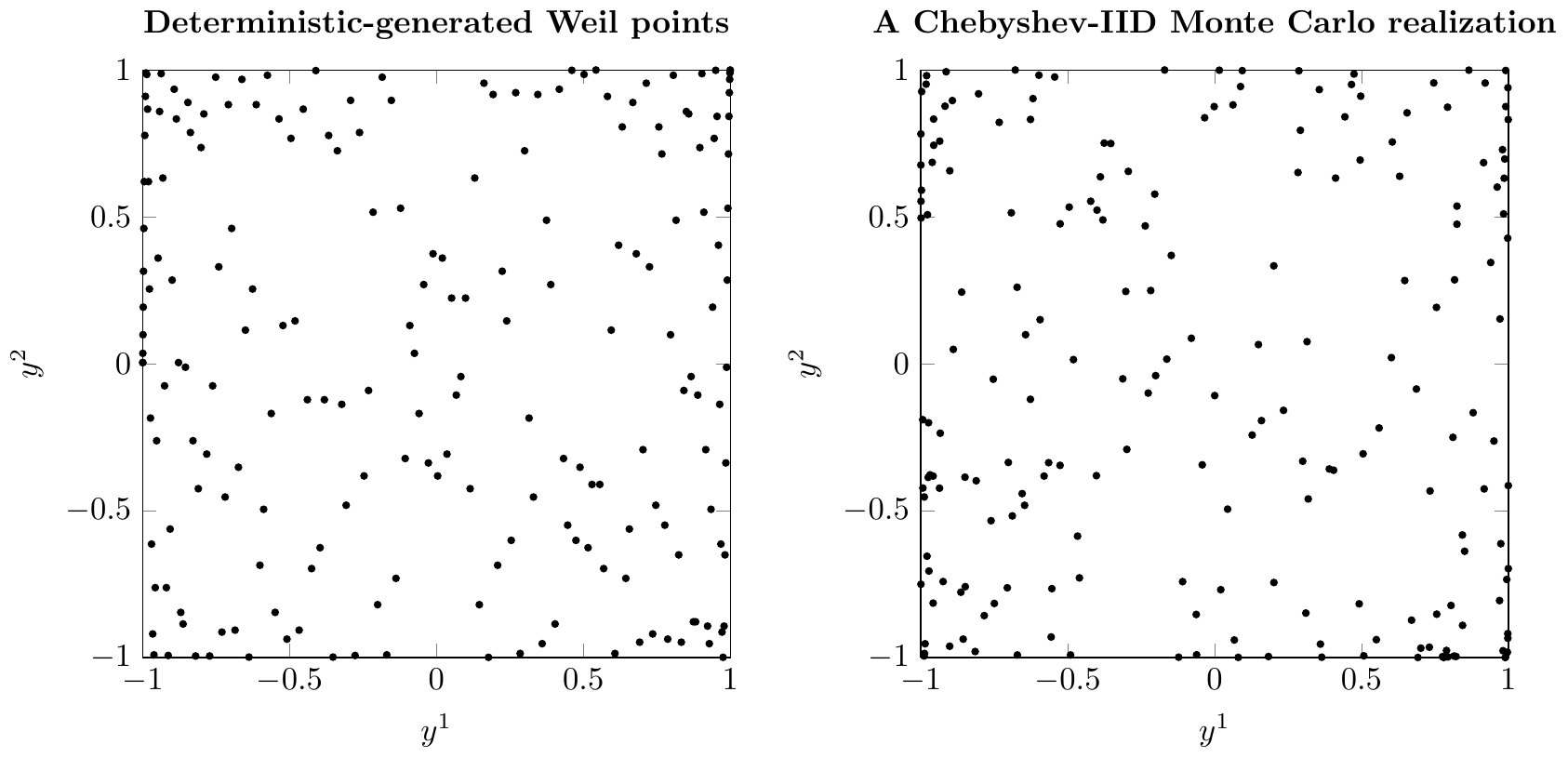}}
\end{center}
\caption{Sample distributions in two dimensions. Left: deterministic Weil points $\Theta_M$ with $M = 997$. Right: Monte Carlo points generated from the Chebyshev measure on $[-1,1]^2$.}\label{fig:samples}
\end{figure}
However, we are now interested in more general approximations: we wish to determine an approximation of the form
\begin{align*}
 f^{\Lambda}(\mathbf{y}) = \sum_{j=1}^N c_j \mathbf{\Phi}_j(\mathbf{y}),
\end{align*}
where the $\Phi_j$ are multivariate polynomials that are orthonormal under a given weight function $\rho(\mathbf{y})$ for $\mathbf{y} \in [-1,1]^d$. We use the weighted discrete least squares formulation given by \eqref{eq:weighted_least} and \eqref{eq:weighted_least_norm} to determine the coefficients $c_j$. Because our choice of basis is polynomials orthonormal under a density $\rho$, we want our least-squares framework to emulate the $\rho$-weighted continuous norm:
\begin{align*}
  \|u\|^2_{w,m} = \sum_{i=0}^m w_i u^2(\mathbf{y}_i) \simeq \int_{[-1,1]^d} u^2(\mathbf{y}) \rho(\mathbf{y}) \, d \mathbf{y}.
\end{align*}
However, our choice of sample points $\mathbf{y}_i$ is \textit{unchanged}: it is the same deterministic Weil sample set $\Theta_M$ as before. An \textit{unweighted} norm with $w_i \equiv 1$ will again emulate the Chebyshev norm; in order to emulate a $\rho$-weighted norm, we must amend the weights as follows:
\begin{align*}
  w_i = \frac{\rho(\mathbf{y}_i)}{\rho_c(\mathbf{y}_i)} = \pi^d \rho(\mathbf{y}_i) \prod_{q=1}^d \left( 1 - \left(y^q_i\right)^2 \right)^{1/2}.
\end{align*}
An example of this will be illustrative: suppose we let $\rho$ be the uniform (probability) density $\rho(\mathbf{y}) \equiv 2^{-d}$ on $[-1,1]^d$. Then we have
\begin{align}\label{eq:w-general}
  w_i = \frac{\rho(\mathbf{y}_i)}{\rho_c(\mathbf{y}_i)} = \left(\pi/2\right)^d \prod_{q=1}^d \left( 1 - \left(y^q_i\right)^2 \right)^{1/2}.
\end{align}
Note that since $w_i$ is applied to the quadratic form \eqref{eq:weighted_least}, we are effectively preconditioning $f(\mathbf{y}_i)$ with $\sqrt{w_i}$. Thus, if the $\Phi_j$ are tensor-product Legendre polynomials (orthonormal under the uniform density), then we are preconditioning our expansion as
\begin{align*}
  \sum_{j=1}^N c_i \Phi_i(\mathbf{y}) \longrightarrow
  \sum_{j=1}^N c_i \sqrt{w_i} \Phi_i(\mathbf{y}) =
  \sum_{j=1}^N c_i \left( \prod_{q=1}^d \left(1 - (y^q)^2\right)^{1/4} \Phi_i(\mathbf{y})\right).
\end{align*}
This type of preconditioning is known to produce well-conditioned design matrices in the context of $\ell^1$ minimization for Legendre approximations \cite{RW}. Of course if $\rho \propto \rho_c$, then we obtain constant weights. Therefore, our proposal for the weights \eqref{eq:w-general} reduces to well-known preconditioning techniques for some special cases.

We note that \eqref{eq:w-general} can be analogized to an importance sampling technique \cite{srinivasan_importance_2002,rubinstein_simulation_2008,bucklew_introduction_2003}. We wish to approximate a $\rho$-weighed measure, but sample $\Theta_M$ according to a $\rho_c$-weighted measure. In the importance sampling framework, one would expect a likelihood term $\rho/\rho_c$ to appear -- this is precisely \eqref{eq:w-general}.

Note that our analysis results from Section \ref{sec:stability} do not directly apply for this weighted approach. We will report on details of such a weighted least square approach in future work.
\begin{remark}
In \cite{FabioL2}, the author considered the standard least square (non-weighted) approach for Legendre approximations with uniformly distributed random points. However, they observed some instability in the procedure, which were expected due to certain undesireable properties of Legendre polynomials. In the next section, we will make the numerical comparisons between our weighted approach and the direct approach proposed in \cite{FabioL2}.
\end{remark}

\begin{figure}[h]
\begin{center}
  \resizebox{0.8\textwidth}{!}{\includegraphics{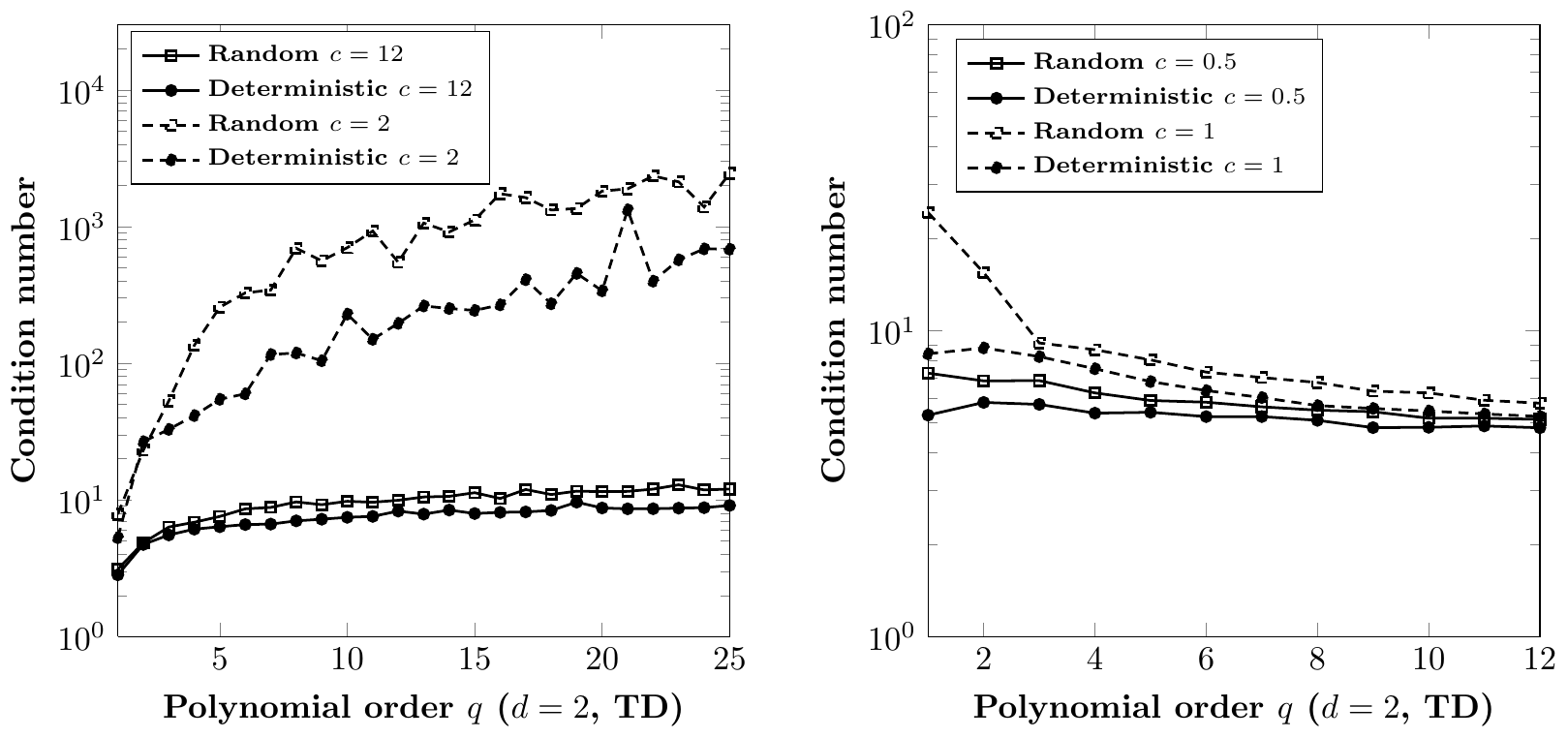}}
\end{center}
\caption{Design matrix $\mathbf{A}$ condition number for two-dimensional TD spaces. Left: $m=c (\# \Lambda).$ Right: $m=c(\# \Lambda)^2.$}\label{fig:condnumber-td}
\end{figure}

\section{Numerical examples}
In this section, we provide several numerical examples that illustrate our method. The main purpose is twofold: (i) to confirm our theoretical results derived in the previous sections, and (ii) to compare the performance of our deterministic points and that of commonly-used randomly-sampled points.

Both the TP and TD space will be considered in the following examples. We remark that the chosen values for the parameters $d$ and $q$, and the particular test functions chosen in our numerical examples do not exhibit particularly special behavior: Results from other parameters and test functions demonstrate similar behavior.
\begin{figure}[h]
\begin{center}
  \resizebox{0.8\textwidth}{!}{\includegraphics{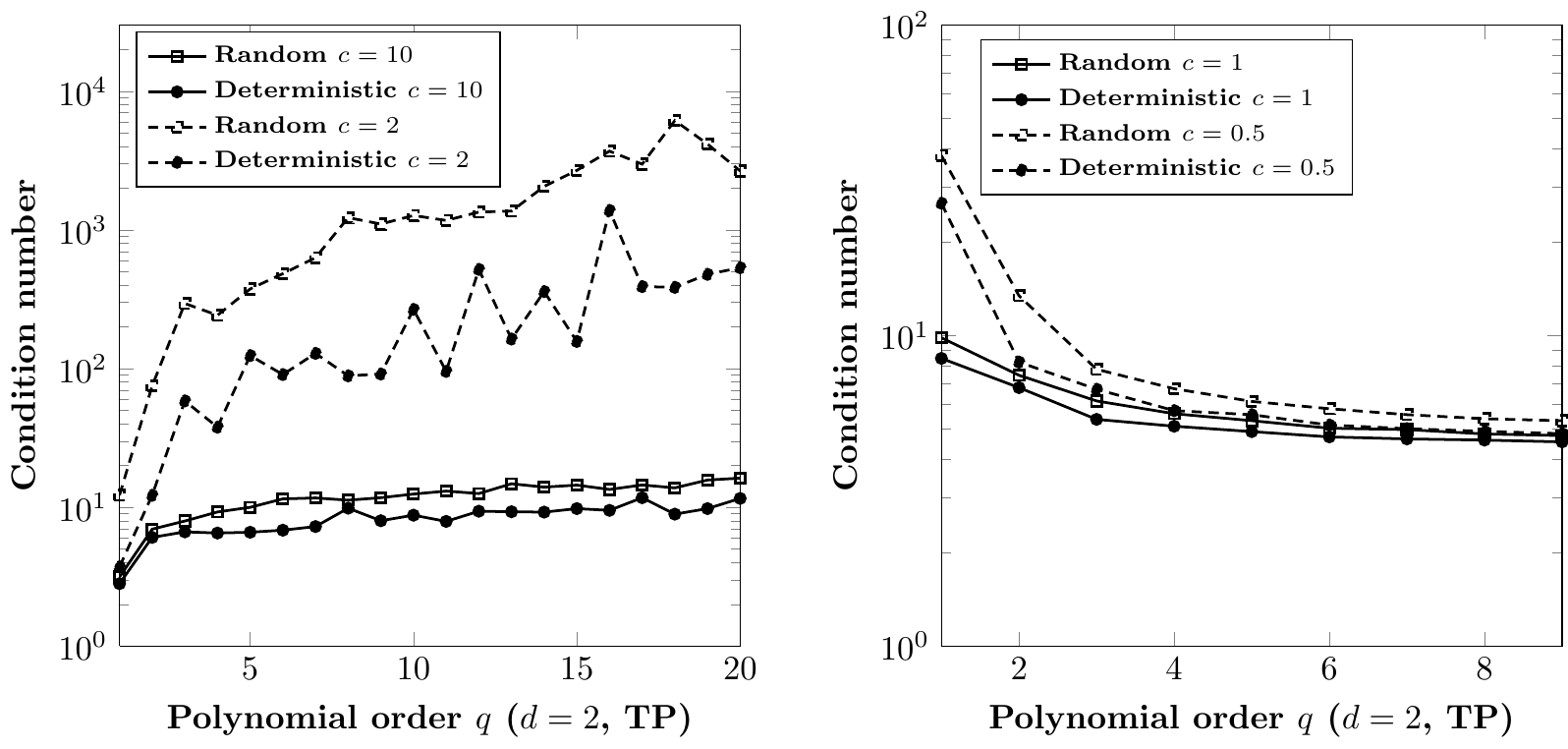}}
\end{center}
\caption{Condition numbers for two-dimensional TP spaces. Left: $m=c(\#\Lambda).$ Right: $m=c(\#\Lambda)^2.$  }\label{fig:condnumber-tp}
\end{figure}
In our plots, we will use squares ($\diamond$) to denote numerical results obtained with a sample grid that is \textit{randomly} chosen in a Monte Carlo fashion, while the results with the deterministic Weil points $\Theta_M$ are plotted with circular dots ($\bullet$)

\subsection{Chebyshev polynomial spaces}

In this section, we consider the least squares projection with a Chebyshev polynomial approximation; this is the case considered in Section \ref{sec:stability}. We provide a comparison between our deterministic points and Monte Carlo points generated from a Chebyshev measure. To gain an intuitive understanding of the grid, we show in Figure \ref{fig:samples} the distributions of the $\Theta_M$ and one realization of a Chebyshev Monte Carlo grid for $d=2.$ We see that both cases cluster points near the boundary; this is expected in light of Theorem \ref{thm:asymptotic-distribution}.

\subsubsection{Linear conditioning}
\begin{figure}[h]
\begin{center}
  \resizebox{0.8\textwidth}{!}{\includegraphics{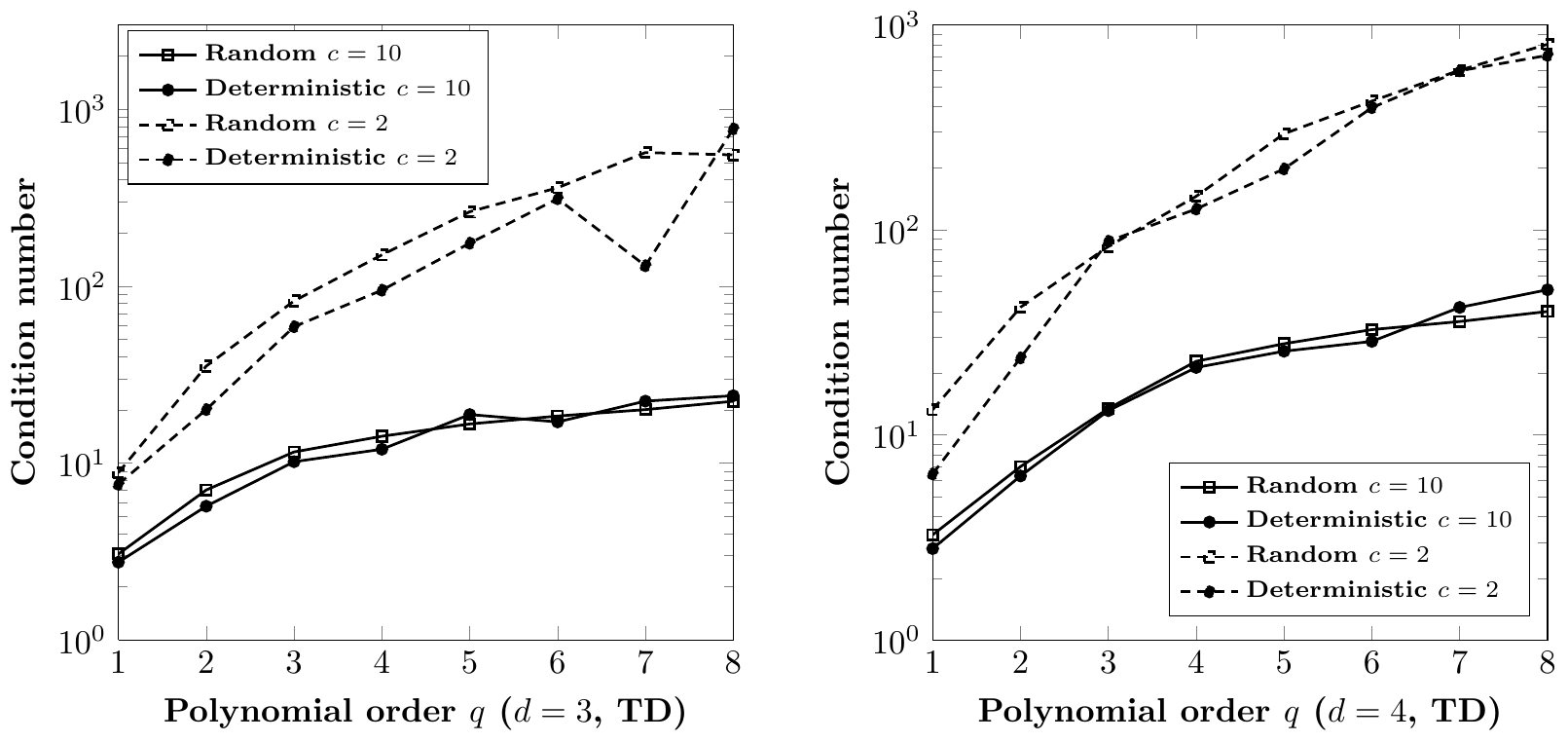}}
\end{center}
\caption{Condition numbers for three- and four-dimensional TD spaces using linear scaling $m=c(\#\Lambda).$ Left: d=3. Right: d=4.}\label{fig:condnumber-td-highd}
\end{figure}

We first investigate how the number of collocation points in $\Theta_M$ affects the condition number
\begin{align*}
\textmd{cond}(\mathbf{A})=\frac{\sigma_{max}(\mathbf{A})}{\sigma_{min}(\mathbf{A})}.
\end{align*}
The test is repeated 100 times and the average is reported whenever random points are used. We will investigate both the linear scaling of degrees of freedom $m = c\#\Lambda$, and the quadratic scaling $m = c(\#\Lambda)^2.$
\begin{remark}
Our $m$ deterministic points are designed using a prime number $M $ with $M=2m-1.$  It is possible that the quantities $M$ with $m = c\cdot \#\Lambda$ or $m = c\cdot (\#\Lambda)^2$ are not prime numbers. In such cases, we just choose the nearest prime number $M$, so that the rules $m = c\cdot (\#\Lambda)$ or $m = c\cdot (\#\Lambda)^2$ are approximated satisfied.
\end{remark}
\begin{figure}[h]
\begin{center}
  \resizebox{0.8\textwidth}{!}{\includegraphics{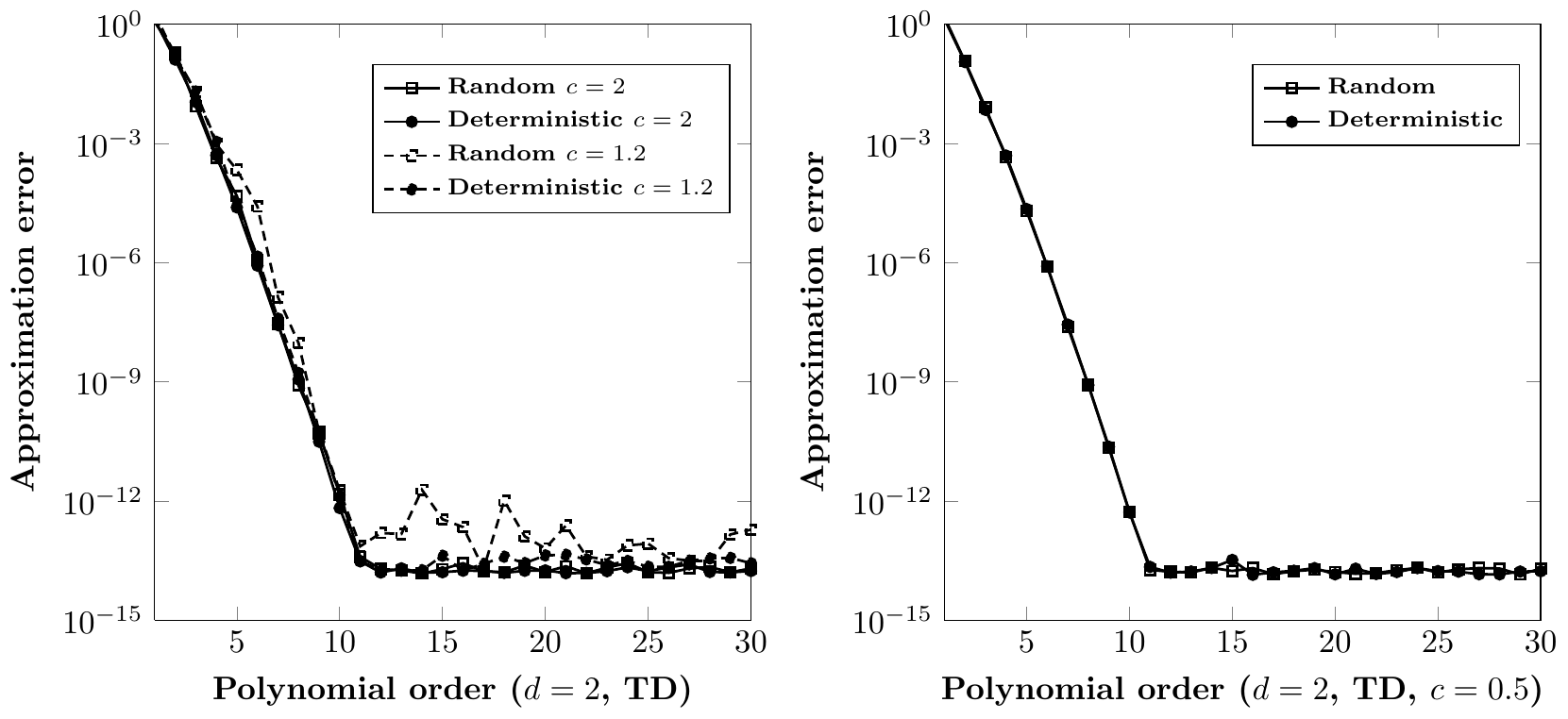}}
\end{center}
\caption{Error with respect to polynomial order in TD space $(d=2).$ Left: $m=c(\#\Lambda).$ Right: $m=0.5(\#\Lambda)^2.$ The target function is $f(\mathbf{Y})=\textmd{exp}^{-\sum_{i=1}^d c_i  Y_i}$}\label{fig:convergence-rates-td-d2}
\end{figure}
\begin{figure}[h]
\begin{center}
  \resizebox{0.8\textwidth}{!}{\includegraphics{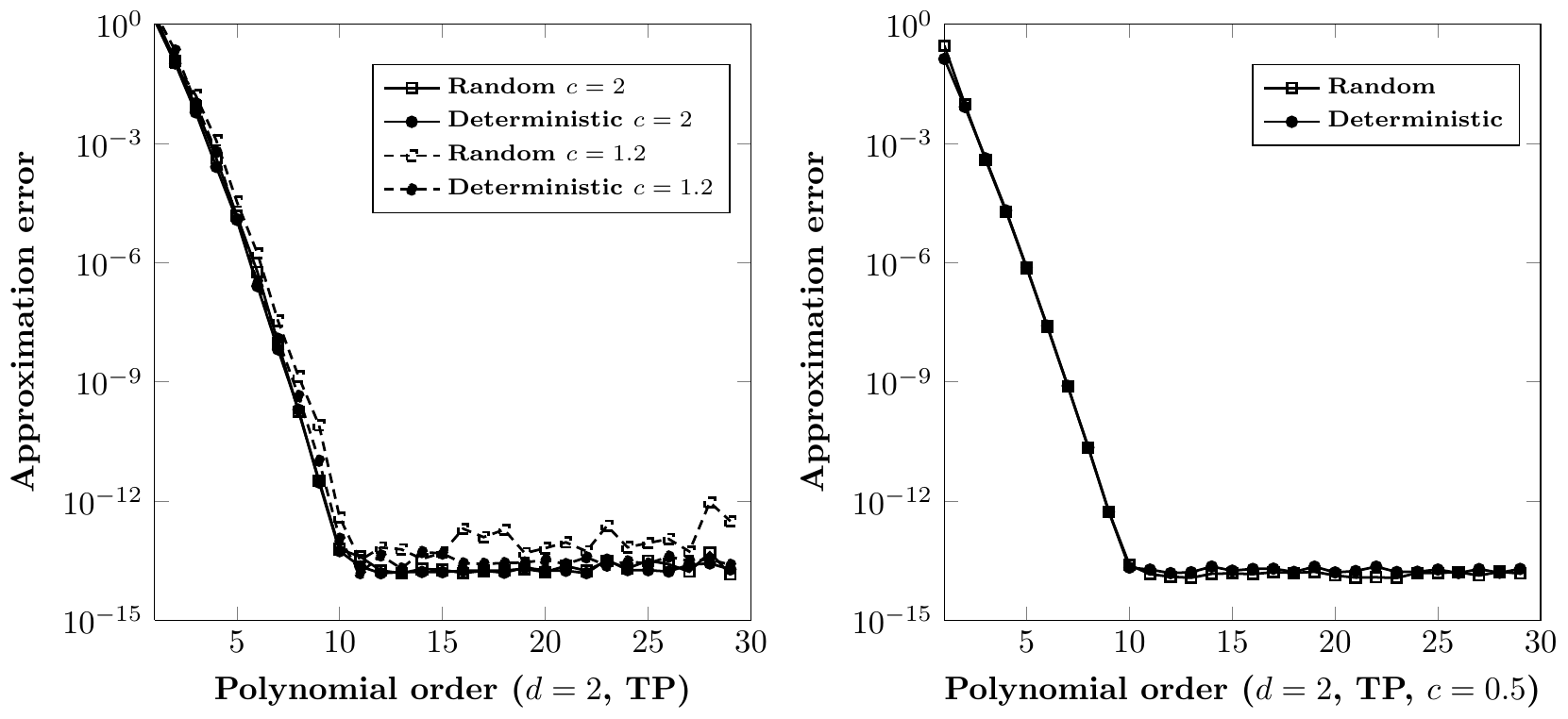}}
\end{center}
\caption{Error with respect to polynomial order using the TP space approximation (d=2). Left: $m=c(\#\Lambda).$ Right: $m=0.5(\#\Lambda)^2.$ The target function is $f(\mathbf{Y})=\textmd{exp}^{-\sum_{i=1}^d c_i  Y_i}$}\label{fig:convergence-rates-tp-d2}
\end{figure}
\begin{figure}[h]
\begin{center}
  \resizebox{0.8\textwidth}{!}{\includegraphics{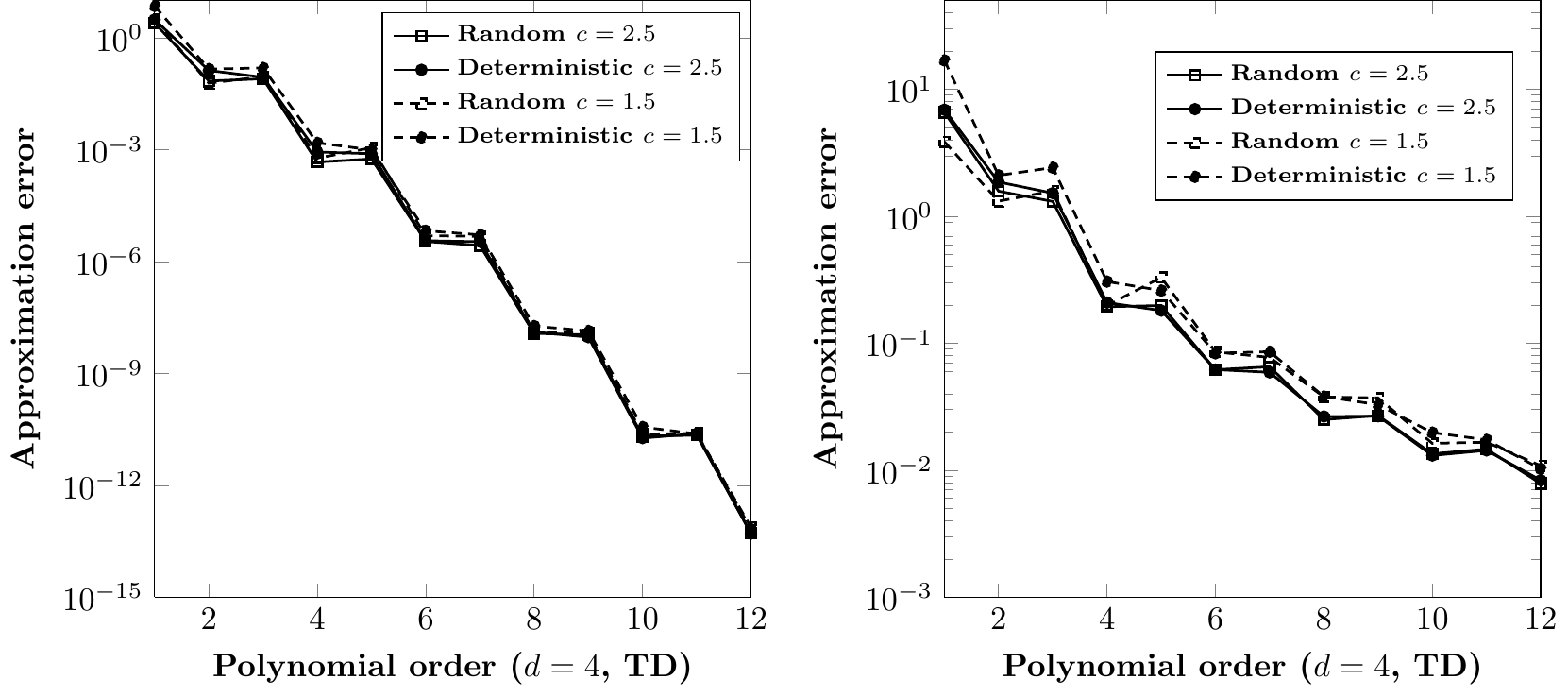}}
\end{center}
\caption{Error with respect to polynomial order in the four-dimensional TD space using linear scaling $m=c(\#\Lambda).$ Left: target function $f=\cos\left(\sum_i(c_iY_i)\right).$   Right: target function $f=\left|\sum_i(c_iY_i)\right|^3.$}\label{fig:convergence-rates-td-d4}
\end{figure}
In Figure \ref{fig:condnumber-td}, condition numbers with respect to the polynomial order $q$ in the two dimensional TD space are shown. In the left-hand pane of Figure \ref{fig:condnumber-td}, we report results obtained with the linear rule $m = c\#\Lambda,$ while in the right-hand pane we report the quadratic rule $m = c(\#\Lambda)^2.$ The behavior of the condition number is clearly different depending on how $m$ depends on $\#\Lambda$. However, the performance of the deterministic points is similar to that of the Monte Carlo points. (In fact, the deterministic points work better.)

An observation worth noting is that the quadratic rule (Figure \ref{fig:condnumber-td}, right) admits decay properties of the condition number with respect to $q$, even with a relatively small scaling $c=0.5$. In contrast, the linear rule (Figure \ref{fig:condnumber-td}) admits a growth of the condition number with respect to the polynomial order $q$ ($c=2,$ dotted lines).
However, when a relative large $c$ is used ($c=12,$ solid lines), the problem becomes much better conditioned. This shows that our analysis in Theorem \ref{th:stablility} might not be optimal, and the linear rule with a relative large coefficient $c$ seems enough in practice to obtain stability.

In \cite{Cohen}, the authors indeed proved that linear scaling is enough to guarantee stability when working with Monte Carlo-generated points from the Chebyshev measure. Such a result does not directly extend to our setting because our grid is deterministic.

The results for the two dimensional TP space are shown in Figure \ref{fig:condnumber-tp}. Again, the left plot is for the linear scaling $m = c\#\Lambda,$ and the right plot is for the quadratic scaling $m = c(\#\Lambda)^2.$ We observe similar results when compared to the TD space results of Figure \ref{fig:condnumber-td}. Condition numbers for linear scaling in higher dimensional cases are also provided in Figure \ref{fig:condnumber-td-highd}. We observe that a linear rule with a large coefficient $c$ might still guarantee stability for high dimensional problems.

\subsubsection{Convergence rates}
\begin{figure}[h]
\begin{center}
\resizebox{0.8\textwidth}{!}{\includegraphics{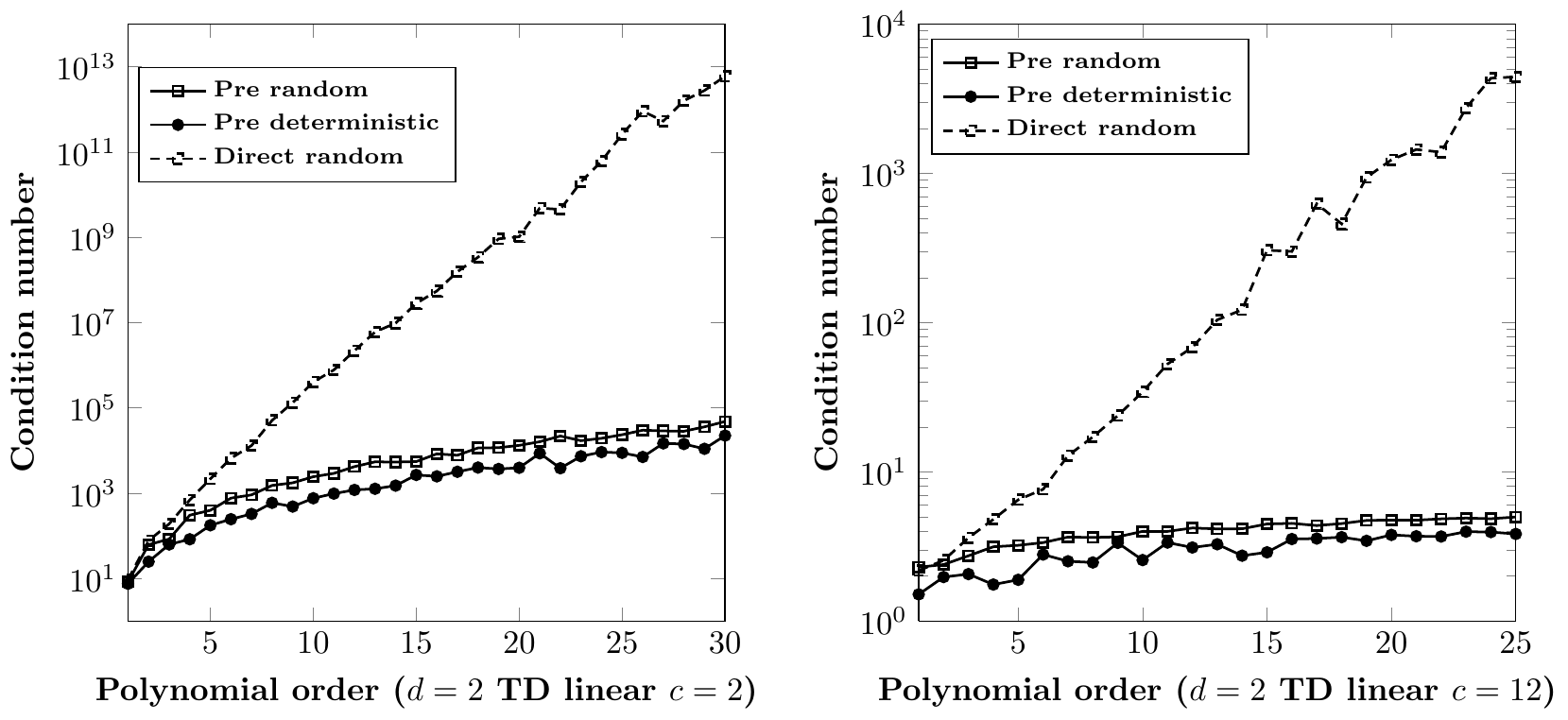}}
\end{center}
\caption{Condition numbers for TD Legendre approximation with linear scaling $m=c(\#\Lambda).$  Left: $c=2.$ Right: $c=12.$ }\label{fig:condnumber-td-legendre}
\end{figure}
Now, we test the accuracy of our method by measuring the convergence rate with respect to the number of collocation points. We measure the error in the $L^2$ norm, computed as the discrete norm on a set of 2000 points that are independent samples from a uniform distribution. We use the target function
$
f(\mathbf{Y})=\textmd{exp}{(-\sum_{i=1}^d c_i  Y_i)},
$
where the parameters $\{c_i\}$ are generated randomly. Error with respect to the polynomial order $q$ in the two-dimensional TP space are shown in Figure \ref{fig:convergence-rates-td-d2}. In the left-hand pane we plot results obtained with linear scaling
$m = c\#\Lambda,$ and the right-hand pane shows quadratic scaling $m = 0.5(\#\Lambda)^2$ for reference. The results from Figure \ref{fig:convergence-rates-td-d2} show both the linear rule and the quadratic rule display the exponential convergence with respect to $q$. The convergence stagnates at machine precision, which is expected. However, one can observe that the error for linear rule with random points with small $c$ ($c=1.2,$ squares with dotted line) exhibits some erratic behavior.
\begin{figure}[h]
\begin{center}
  \resizebox{0.8\textwidth}{!}{\includegraphics{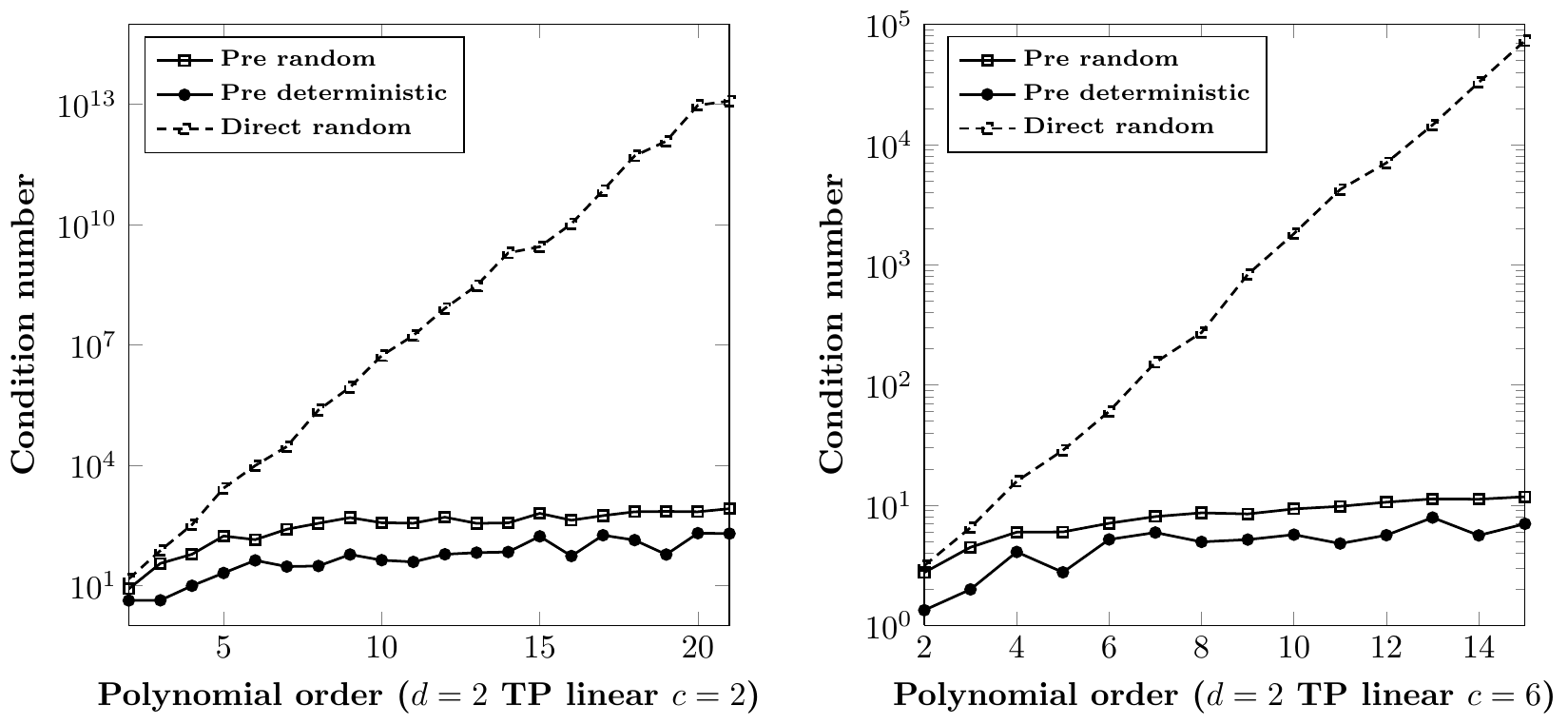}}
\end{center}
\caption{Condition numbers for TP Legendre approximation with linear scaling $m=c(\#\Lambda).$  Left: $c=2.$ Right: $c=6.$ }\label{fig:condnumber-tp-legendre}
\end{figure}
\begin{figure}[h]
\begin{center}
  \resizebox{0.8\textwidth}{!}{\includegraphics{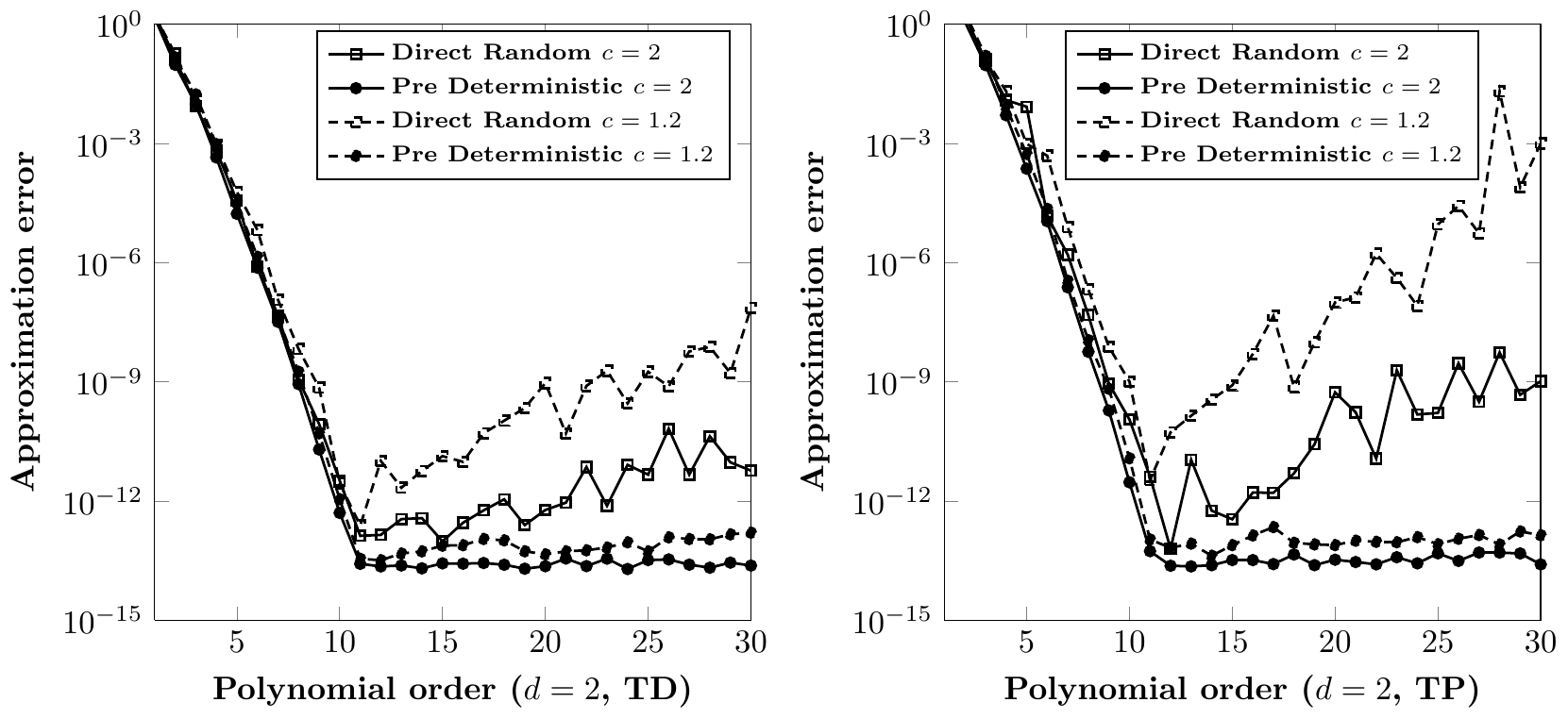}}
\end{center}
\caption{Error with respect to polynomial order ($d=2$) with linear scaling $m=c(\#\Lambda).$ Left: TD space.  Right: TP space.}\label{fig:convergence-legendre}
\end{figure}
The convergence results in two dimensional TD space are shown in Figure \ref{fig:convergence-rates-tp-d2}. The results are similar to Figure \ref{fig:convergence-rates-td-d2}. Numerical tests for other target functions (in the four-dimensional TD space) are also provided in Figure \ref{fig:convergence-rates-td-d4}. The left pane uses the target function $f=\cos\left(\sum_{i=1}^4 (c_iY_i)\right),$ while the right pane uses the target function $f=\left|\sum_{i=1}^4(c_iY_i)\right|^3$. We notice that the convergence rates depend closely on the regularity of the target function, as expected.

\begin{remark}
The results above suggest that the linear rule is enough to obtain the best expected convergence rate. The convergence result of Theorem \ref{th:convergence} leverages the stability results from Theorem \ref{th:stablility}, and therefore the analysis might not be optimal with respect to scaling $m$ versus $\#\Lambda$. One might be able to loosen the assumptions Theorem \ref{th:convergence} and obtain the same result without relying on the quadratic-scaling stability proven in Theorem \ref{th:stablility}; this investigation is the subject of current ongoing work.
\end{remark}

\subsection{Legendre polynomial spaces}

We now work with the uniform measure -- our basis functions $\mathbf{\Phi}_j$ are tensor-product Legendre polynomials. Although Corollary \ref{co:measure} implies that the approximation using Chebyshev polynomials is efficient, one might prefer to use other polynomial approximations and the Legendre approximation, corresponding to unweighted $L^2$-approximation is a prime candidate for investigation. As discussed in Section \ref{sec:weighted-ls}, the weighted least squares approach will be used for non-Chebyshev approximations. We will compare the performance of the deterministic points and Monte Carlo random points generated from the uniform measure. In our plots, we denote by \textit{pre deterministic} results obtained using our deterministic Weil points $\Theta_M$, and by \textit{pre random} the results obtained from uniform-measure Monte Carlo collocation. In \cite{FabioL2}, the authors have suggested to use the standard (unweighted) least squares approach with uniformly distributed randomly-generated points. For the purposes of comparison, we will display numerical results for that method, and these results are referred to as \textit{direct random}.

In Figure \ref{fig:condnumber-td-legendre}, condition numbers with respect to the polynomial order $q$ in the two-dimensional TD space are shown using linear scaling $m = c\#\Lambda$. The left plot shows the scaling $c=2,$ while the right plot uses the scaling $c=12.$
The corresponding results for the two dimensional TP space are shown in Figure \ref{fig:condnumber-tp-legendre}, where again the left-hand plot uses scaling $c=2,$ and the right-hand plot uses scaling $c=6.$ It is clear that \textit{direct random} approach (dotted line with squares) admits an undesirable (apparently exponential) growth of the condition number with respect to the polynomial order. However, the weighted approach results in condition number behavior that is similar to the Chebyshev approximation. Namely, the linear rule (with a reasonably large $c$) results in an almost-bounded condition number.

The corresponding convergence results are shown in Figure \ref{fig:convergence-legendre}, both for the the TD space (left) and the TP space (right). We observe that linear scaling for \textit{direct random} (squares) exhibist convergence deterioration for increasing polynomial order. This deterioration is due to the ill-conditioning of the design matrix for large $q$. These results are consistent with what is shown in \cite{FabioL2}, where the authors claim that a quadratic scaling of $m$ versus $\#\Lambda$ is needed when working with the uniform measure using uniformly-distributed Monte Carlo points. In constrast our weighted approach (star plots) displays a stable, exponential convergence with linear scaling. Essentially, the weighted approach inherits all the advantages of the Chebyshev approximation.

\section{Conclusions}
In this work, we discuss the problem of approximating a multivariate function by discrete least-square projection onto a polynomial space using specially designed \textit{deterministic} points that are motivated by a Theorem due to Andr\'e Weil. The intended application is parametric uncertainty quantification where solutions are parameterized by random variables.

In our approach, stability and optimal convergence estimates are shown using the Chebyshev basis and approximation, provided the number of collocation points scales quadratically with the dimension of the polynomial space. We also indicate the possible application of derived results for quantifying epistemic uncertainties where the density function of the parametric random variable is not entirely determined. Extensions to general polynomial approximations are discussed by considering a weighted least squares approach. We prove that the deterministic Weil points distribute asympototically according to the tensor-product Chebyshev measure, and this knowledge allows us to prescribe the least squares weights in an explicit fashion.

Numerical comparisons between our deterministic points and random (Monte Carlo) points are provided. We observe that the deterministic points work as well as randomly generated Chebyshev points. The numerical results also suggest that for Legendre approximations using our Weil points, using the weighted approach is superior to standard (unweighted) least squares approaches.

In this work, we have considered only functions with values in $\mathbb{R}.$ In applications of interest to UQ, one is usually interested in functions that take values in Banach
spaces, and this will be one direction of future research.

\section*{Acknowledgment}
Z. Xu is supported by the National Natural Science Foundation of China (No. 11171336, No.11331012 and No. 11321061) and T. Zhou
is supported by the National Natural Science Foundation of China (No. 91130003 and No. 11201461).

\end{document}